\documentclass[12pt,reqno]{amsart}
\usepackage{hyperref}

\def\m{\mathfrak{m}}
\def\R{\mathcal{R}}
\def\NN{\mathbb{N}}
\def\red{\operatorname{red}}
\def\jr{\operatorname{jr}}
\newtheorem{theorem}{Theorem}[section]
\newtheorem{definition}[theorem]{Definition}

\newtheorem{lemma}[theorem]{Lemma}
\newtheorem{proposition}[theorem]{Proposition}
\newtheorem{example}[theorem]{Example}

\newtheorem{question}[theorem]{Question}
\newtheorem{remark}[theorem]{Remark}
\newtheorem{corollary}[theorem]{Corollary}

\oddsidemargin 5mm
\evensidemargin 5mm

\textwidth 435pt \textheight 685pt \topmargin -1cm
\begin{document}
\title{A Northcott type inequality for Buchsbaum-Rim coefficients}

\author[Balakrishnan R.]{Balakrishnan R.$^*$}
\email{r.balkrishnan@gmail.com}

\author{A. V. Jayanthan}
\email{jayanav@iitm.ac.in}
\address{Department of Mathematics, Indian Institute of Technology
Madras, Chennai, INDIA -- 600036.}
\thanks{$*$ Supported by the Council of Scientific and Industrial
Research (CSIR), India}
\thanks{AMS Classification 2010: 13D40, 13A30}
\keywords{Buchsbaum-Rim function, Buchsbaum-Rim polynomial, Northcott
inequality, Rees algebra of modules}
\maketitle

\begin{abstract}

In 1960, D.G. Northcott proved that if $e_0(I)$ and $e_1(I)$ denote
zeroth and first Hilbert-Samuel coefficients of an $\mathfrak
m$-primary ideal $I$ in a Cohen-Macaulay local ring $(R,\mathfrak m)$, then
$e_0(I)-e_1(I)\le \ell (R/I)$. In this article, we study an analogue of
this inequality for Buchsbaum-Rim coefficients. We prove that if
$(R,\mathfrak m)$ is a two dimensional Cohen-Macaulay local ring and $M$ is
a finitely generated $R$-module contained in a free module $F$ with
finite co-length, then $br_0(M)-br_1(M)\le \ell (F/M)$, where
$br_0(M)$ and $br_1(M$) denote zeroth and first Buchsbaum-Rim
coefficients respectively.
\end{abstract}

\section{Introduction}

Let $(R,\mathfrak m)$ be a Noetherian local ring of dimension
$d>0$.
Let $M \subset F=R^r$ be a finitely generated $R$-module
such that $\ell (F/M) < \infty$, where $\ell(-)$ denote the length
function. Let
$\mathcal S(F)=\underset {n\ge 0}{\bigoplus}{\mathcal S_n(F)}$ denote
the Symmetric algebra of $F$,  and $\mathcal R(M)=\underset {n\ge
0}{\bigoplus}\mathcal R_n(M)$ denote the Rees algebra of $M$, which is
image of the natural map from the Symmetric algebra of $M$ to the
Symmetric algebra of $F$. Generalizing the notion of Hilbert-Samuel
function, D. A. Buchsbaum and D. S. Rim studied the function
$BF(n)=\ell (\mathcal S_n(F)/\mathcal R_n(M))$ for $n\in \mathbb{N}$. In
\cite {BR}, they proved that $BF(n)$ is
given by a polynomial of degree $d+r-1$ for $n\gg0$, i.e., there
exists a polynomial $BP(x)\in \mathbb Q[x]$ such that $BF(n)=BP(n)$
for $n\gg 0.$ The function $BF(n)$ is called the Buchsbaum-Rim
function of $M$ with respect to $F$ and the polynomial $BP(n)$ is
called the corresponding Buchsbaum-Rim polynomial. Following the
notation used for the Hilbert-Samuel polynomial, one writes the
Buchsbaum-Rim polynomial as
$$BP_M(n)=\sum_{i=0}^{d+r-1}(-1)^i br_i(M){n+d+r-i-2\choose d+r-i-1}.$$
The coefficients $br_i(M)$ for $i=0,\ldots ,d+r-1$ are known as
Buchsbaum-Rim coefficients.

When $r=1$, set $M = I$, an $\m$-primary ideal in $R$. In this case,
Buchsbaum-Rim polynomial coincides with usual Hilbert-Samuel
polynomial and its coefficients will be denoted by $e_i(I)$,
called the Hilbert-Samuel coefficients. While the Hilbert-Samuel
coefficients are very well studied objects and the relationship of its
properties with the properties of the ideal and the corresponding
blowup algebras are well known, there is a dearth of results in this
direction on Buchsbaum-Rim coefficients.
In \cite{DGN}, D. G. Northcott proved that
\begin{theorem}\cite[Theorem 1, 3]{DGN}\label{n}
Let $(R,\mathfrak m)$ be a Cohen-Macaulay local ring of dimension $d >
0$ with infinite residue field and let $I$ be an $\mathfrak m$-primary
ideal. Then
  \begin{enumerate}
    \item $e_0 (I)-e_1 (I) \leq \ell {(R/I)}.$
    \item $e_1 (I)\ge 0$ and the equality holds if and only if $I$ is generated by $d$ elements(i.e., $I$ is a parameter ideal).
  \end{enumerate}
\end{theorem}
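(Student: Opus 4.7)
The plan is to prove both parts by induction on $d$, after passing to a minimal reduction of $I$ that is a parameter ideal. Since $R/\m$ is infinite, pick a minimal reduction $J=(a_1,\ldots,a_d)\subseteq I$. Because $R$ is Cohen--Macaulay of dimension $d$, the system of parameters $a_1,\ldots,a_d$ is a regular sequence, so the associated graded ring $\mathrm{gr}_J(R)$ is a polynomial ring over $R/J$. This yields $\ell(R/J^{n+1})=\ell(R/J)\binom{n+d}{d}$ for every $n\geq 0$, hence $e_0(J)=\ell(R/J)$ and $e_i(J)=0$ for $i\geq 1$. Since $J$ is a reduction, $e_0(I)=e_0(J)=\ell(R/J)$, so the inequality in (1) is equivalent to the lower bound $e_1(I)\geq \ell(I/J)$.

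For the base case $d=1$, write $J=(x)$ with $x$ a non-zero-divisor. The short exact sequence
\[
0\longrightarrow I^{n+1}/J^{n+1}\longrightarrow R/J^{n+1}\longrightarrow R/I^{n+1}\longrightarrow 0
\]
together with the closed form $\ell(R/J^{n+1})=\ell(R/J)(n+1)$ shows that $\ell(I^{n+1}/J^{n+1})=e_1(I)$ for $n\gg 0$. Multiplication by $x^n$ induces a map $I/J\to I^{n+1}/J^{n+1}$ whose injectivity follows from $x$ being a non-zero-divisor: if $x^n a\in (x^{n+1})$, then cancelling $x^n$ gives $a\in(x)=J$. Hence $e_1(I)\geq \ell(I/J)$, proving (1). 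The same identity shows $e_1(I)\geq 0$, and $e_1(I)=0$ forces $I^{n+1}=J^{n+1}$ for large $n$; a standard cancellation in the Rees algebra (again using that $x$ is a non-zero-divisor) then yields $I=J$, so $I$ is a parameter ideal, establishing (2).

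For the inductive step $d>1$, choose a superficial element $x\in I$ for $I$; since $R$ is Cohen--Macaulay and $R/\m$ is infinite, one can arrange that $x$ is also $R$-regular. Setting $\bar R=R/(x)$ and $\bar I=I\bar R$, the standard superficial-element identities give $e_i(I)=e_i(\bar I)$ for $i=0,\ldots,d-1$, and $\ell(\bar R/\bar I)=\ell(R/I)$. Applying the inductive hypothesis on the $(d-1)$-dimensional Cohen--Macaulay local ring $\bar R$ to $\bar I$ yields both (1) and the non-negativity of $e_1$ for $I$. For the equality case, $e_1(I)=0$ forces $\bar I$ to be a parameter ideal in $\bar R$, and this lifts to $I$ being a parameter ideal in $R$ by Nakayama combined with the regularity of $x$. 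The main obstacle is this inductive reduction: producing an $x$ that is simultaneously superficial and $R$-regular, and verifying the invariance $e_i(I)=e_i(\bar I)$ for $i\leq d-1$, both require the Artin--Rees lemma and the Sally-style analysis of the $x$-adic filtration on $\{I^n\}$, and depend essentially on the Cohen--Macaulay hypothesis.
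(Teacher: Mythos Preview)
The paper does not supply a proof of this theorem at all: it is quoted verbatim as \cite[Theorem 1, 3]{DGN} and used only as background motivation, so there is no argument in the paper to compare yours against. Your task here was to reconstruct Northcott's classical proof, and what you wrote is essentially the standard one: reduce to $e_1(I)\ge \ell(I/J)$ using a minimal reduction $J$, handle $d=1$ directly via the injection $I/J\hookrightarrow I^{n+1}/J^{n+1}$ given by multiplication by $x^n$, and for $d>1$ descend modulo a superficial regular element, invoking the invariance $e_i(I)=e_i(\bar I)$ for $i\le d-1$.

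Two small remarks. First, in the $d=1$ equality case you invoke ``a standard cancellation in the Rees algebra'' to get $I=J$ from $I^{n+1}=J^{n+1}$, but this is unnecessary: you have already shown that multiplication by $x^n$ embeds $I/J$ into $I^{n+1}/J^{n+1}$, so if the latter is zero then $I=J$ immediately. Second, your closing paragraph flags the existence of a superficial regular element and the identity $e_i(I)=e_i(\bar I)$ as the ``main obstacle''; these are genuine ingredients but are entirely standard once $R$ is Cohen--Macaulay with infinite residue field (superficial elements avoid the associated primes of $R$, hence are regular since $\operatorname{depth} R>0$), so describing them as obstacles overstates the difficulty. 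The lift of the equality case in (2) from $\bar R$ to $R$ is also simpler than you suggest: since $x\in I$, if $\bar I$ is generated by $d-1$ elements then $I$ is generated by those lifts together with $x$, with no Nakayama argument needed.
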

C. Huneke and A. Ooishi independently studied the equality in Theorem
\ref{n}(1):
\begin{theorem}\label{ho}(\cite{Hun},\cite{o})
Let $(R,\m)$ be a Cohen-Macaulay local ring of dimension $d>0$ and
let $I$ be an $\m$-primary ideal of $R$. Then $e_0(I) - e_1(I) =
\ell(R/I)$ if and only if there exists a minimal reduction $J \subset
I$ such that $I^2 = JI$.
\end{theorem}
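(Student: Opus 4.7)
\textbf{Proof proposal for Theorem \ref{ho}.} My plan is to establish the equivalence by induction on $d=\dim R$; after a faithfully flat extension (e.g.\ $R\to R(t)$) if needed, I may assume the residue field is infinite so that minimal reductions exist. The base case $d=1$ is handled by an explicit filtration argument that strengthens Theorem \ref{n}(1), and the inductive step reduces modulo a superficial element, with the main difficulty being a lifting issue.

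\emph{Base case, $d=1$.} Let $J=(x)$ be any minimal reduction of $I$; since $R$ is Cohen--Macaulay of dimension one, $x$ is a nonzerodivisor, and multiplication by $x^k$ gives an isomorphism
\[
\frac{J^k I^{n+1-k}}{J^{k+1}I^{n-k}}\;\cong\;\frac{I^{n+1-k}}{JI^{n-k}}
\]
for each $0\le k\le n$. Filtering $I^{n+1}$ through $J^{n+1}\subseteq J^nI\subseteq\cdots\subseteq JI^n\subseteq I^{n+1}$ yields
\[
\ell(I^{n+1}/J^{n+1})\;=\;\sum_{k=0}^{n}\ell(I^{k+1}/JI^k).
\]
Combined with $\ell(R/J^{n+1})=(n+1)e_0(I)$ (valid because $x$ is a nonzerodivisor) and the Hilbert--Samuel expansion $\ell(R/I^{n+1})=(n+1)e_0(I)-e_1(I)$ for $n\gg0$, letting $n\to\infty$ gives
\[
e_1(I)\;=\;\sum_{k\ge 0}\ell(I^{k+1}/JI^k).
\]
The $k=0$ term equals $\ell(I/J)=e_0(I)-\ell(R/I)$ and every other term is non-negative, which reproves Northcott's inequality and shows that equality holds if and only if $I^{k+1}=JI^k$ for all $k\ge 1$. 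A short induction reduces this to $I^2=JI$, finishing the base case.

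\emph{Inductive step, ``if'' direction.} Assume $d\ge 2$ and $I^2=JI$ for a minimal reduction $J=(x_1,\dots,x_d)$ of $I$ with $x_1$ superficial for $I$. Set $\bar R=R/(x_1)$, $\bar I=I\bar R$, $\bar J=J\bar R$; then $\bar R$ is Cohen--Macaulay of dimension $d-1$, $\bar J$ is a minimal reduction of $\bar I$, and $\bar I^2=\bar J\bar I$ is immediate. By the inductive hypothesis $e_0(\bar I)-e_1(\bar I)=\ell(\bar R/\bar I)$; this lifts to $R$ using the standard invariances $e_i(I)=e_i(\bar I)$ for $i=0,1$ (under quotient by a superficial element, $d\ge 2$) and $\ell(R/I)=\ell(\bar R/\bar I)$ (because $x_1\in I$).

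\emph{Main obstacle: lifting in the ``only if'' direction.} The delicate direction is the converse in the inductive step: from $e_0(I)-e_1(I)=\ell(R/I)$ one must produce a minimal reduction $J$ with $I^2=JI$. Picking a superficial $x_1$ and passing to $\bar R$, the length equality descends and induction yields a minimal reduction $\bar J'$ of $\bar I$ with $\bar I^2=\bar J'\bar I$. Lifting generators of $\bar J'$ to $y_2,\dots,y_d\in I$, the ideal $J=(x_1,y_2,\dots,y_d)$ is a minimal reduction of $I$ by a routine Artin--Rees argument, and its image in $\bar R$ equals $\bar J'$. The crux is then showing $\bar I^2=\bar J\bar I\Rightarrow I^2=JI$: from $I^2\subseteq JI+(x_1)$, any $a\in I^2$ has the form $a=b+x_1c$ with $b\in JI$ and $c\in(I^2:x_1)$, so it suffices that $(I^2:x_1)=I$, i.e.\ that $x_1^*\in\operatorname{gr}_I(R)_1$ is a nonzerodivisor on $\operatorname{gr}_I(R)_0=R/I$. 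My plan is to arrange this by choosing $x_1$ sufficiently generic and using that the length hypothesis forces enough depth on $\operatorname{gr}_I(R)$ (via a Valabrega--Valla style argument) for such an $x_1$ to exist. This depth-theoretic lifting is the principal technical hurdle of the argument.
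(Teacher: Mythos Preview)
The paper does not supply its own proof of Theorem~\ref{ho}; the result is quoted from \cite{Hun} and \cite{o} as background for the Buchsbaum--Rim analogue, so there is nothing in the paper to compare your argument against directly.

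On the merits of your proposal: the base case $d=1$ and the ``if'' direction of the inductive step are correct and standard. The ``only if'' direction, however, has a genuine gap at exactly the point you flag. You need $(I^2:x_1)=I$, equivalently that $x_1^*\in\operatorname{gr}_I(R)_1$ annihilates no nonzero element of $\operatorname{gr}_I(R)_0$, and you propose to obtain this by genericity once you know $\operatorname{depth}\operatorname{gr}_I(R)>0$. But your stated route to that depth bound --- ``a Valabrega--Valla style argument'' --- runs in the wrong direction: Valabrega--Valla deduces the Cohen--Macaulayness of $\operatorname{gr}_I(R)$ \emph{from} the relations $J\cap I^n=JI^{n-1}$ (which follow once $I^2=JI$ is known), not the other way around. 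As written, the argument is circular. The gap can be closed --- one version of Sally's machine says that if $x_1$ is superficial and $\operatorname{gr}_{\bar I}(\bar R)$ has large enough depth (which the inductive hypothesis supplies via $\bar I^2=\bar J\bar I$), then $x_1^*$ is regular on $\operatorname{gr}_I(R)$ --- but this is a substantive lemma that must be stated and proved, not a byproduct of choosing $x_1$ generically.

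It is worth noting that the technique the paper develops in Section~3 for the Buchsbaum--Rim analogue, namely the Sally module, gives a cleaner proof of Theorem~\ref{ho} itself that avoids both induction on $d$ and the lifting problem: one shows that $S_J(I)=\bigoplus_{n\ge1}I^{n+1}/IJ^n$ is either zero or has dimension exactly $d$ over $\mathcal R(J)$ (every associated prime contains $\mathfrak m\mathcal R(J)$), and that its Hilbert polynomial has leading coefficient $e_1(I)-e_0(I)+\ell(R/I)$. The numerical equality then forces $S_J(I)=0$, i.e.\ $I^2=JI$, directly.
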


In \cite{BUV}, J. Brennan, B. Ulrich and W. V. Vasconcelos proved that
Theorem \ref{n}(2) generalizes to Buchsbaum-Rim coefficient: if
$(R,\m)$ is a Cohen-Macaulay ring, then $br_1(M)$ is non-negative and
$br_1(M)$ vanishes if and only if $M$ is a parameter module. In
\cite{HH2}, F. Hayasaka and E. Hyry studied the Buchsbaum-Rim function
of a parameter module $N$ over a Noetherian local ring and they proved
that $br_1(N)\le 0$ and equality holds if and only if the ring is Cohen-Macaulay.

Motivated by Theorems \ref{n} and \ref{ho}, we ask:
\begin{question}
Let $(R,\m)$ be a Cohen-Macaulay local ring of dimension $d >0$, $F$
be a free module of rank $r$ and $M$ be a submodule such that
$\ell(F/M) < \infty$. Then is the inequality $br_0(M) - br_1(M) \leq
\ell(F/M)$ true? Is it true that the equality holds if and only if the
reduction number of $M$ with respect to a minimal reduction is at most
one?
\end{question}

In this article, we prove the inequality in the case $\dim R = 2$ and
show that the module having reduction number one is a sufficient
condition for equality. We now give a short description of the paper.

In Section 2, we begin
with an example to show that the Northcott type inequality does not
hold true for Buchsbaum-Rim coefficients if $\dim R = 1$. We then
consider the case $\dim R = d \geq 2$ and $M = I_1 \oplus \cdots
\oplus I_r \subset R^r$, where $I_i$'s are $\m$-primary ideals in $R$.
When the Rees algebra $\R(M)$ is Cohen-Macaulay, we obtain an
expression for the Buchsbaum-Rim coefficients $br_0(M)$ and $br_1(M)$
in terms of the mixed multiplicities of the ideals $I_1, \ldots, I_r$
and derive that if $d = 2$ and $r = 2$, we have the equality $br_0(M)
- br_1(M) = \ell(F/M)$. We also prove that if $\dim R = 2$ and $M$ is
an $R$-submodule of $F = R^r$ with reduction number of $M$ being one,
then $br_0(M) - br_1(M) = \ell(F/M)$.

In Section 3, we define an analogue of Sally module of a module with
respect to a reduction. We obtain an expression for the Hilbert
polynomial of the Sally module using the Buchsbaum-Rim coefficients
and derive the inequality $br_0(M) - br_1(M) \leq \ell(F/M)$ when
$\dim R = 2$. We also prove that if $\red(M) = 1$, then the equality
holds, Theorem \ref{northcott}.

In Section 4, we study the problem for modules which are direct sum of several copies of an $\m$-primary ideal. Let $(R,\m)$ be a Cohen-Macaulay local ring of
dimension $d \geq 2$ and
$I$ be an $\m$-primary ideal. Let $M = I \oplus \cdots
\oplus I$ ($r$-times, $r \geq 1$), then $br_0(M) - br_1(M) \leq
\ell(F/M)$, Theorem \ref{newthm}. We also prove that in dimension $2$,
the equality holds if and only if $\red(M) = 1$, Corollary
\ref{directsum}. We also compute some examples to illustrate the
Northcott inequality.
\vskip 2mm
\noindent
\textbf{Acknowledgement:} The authors would like to thank E. Hyry, S.
Zarzuela and W. V. Vasconcelos for going through a first draft and
making many useful suggestions for improvement and further research.
We are also thankful to the referee for a meticulous reading and
suggesting several improvements.
\section{Reduction number one}

In this section, we obtain certain sufficient conditions for the
equality $br_0(M) - br_1(M) = \ell(F/M)$.  We begin by recalling some
basic terminologies which are essential for studying Buchsbaum-Rim
polynomial.  Let $M\subseteq F = R^r$ be such that $\ell(F/M) <
\infty$. Let  $N$ be a submodule of $M$. We say that $N$ is reduction
of $M$ if Rees algebra $\mathcal R(M)$ is integral over the
$R$-subalgebra $\mathcal{R}(N)$.  Equivalently this condition is
expressed as $\mathcal R_{n+1}(M)=N\mathcal R_n(M)$ for $n\gg 0$,
where the multiplication is done as $R$-submodules of $\R(M)$. The
least integer $s$ such that $\mathcal R_{s+1}(M)=N\mathcal R_s(M)$ is
called the reduction number of $M$ with respect to $N$, denoted as
$\red_N(M)$. The reduction number of the module $M$, denoted $\red
(M),$ is defined as $\red (M)=\min \{\red_N(M):~ N \mbox{ is a minimal
reduction of M}\}$. If $N$ is a submodule of $F$ generated by $d+r-1$
elements such that $\ell(F/N) < \infty$, then $N$ is said to be a
parameter module. It was proved in \cite{BUV} that if $\ell(F/M) <
\infty$, then there exists minimal reduction generated by $d+r-1$
elements.  For more details on minimal reductions, we refer the reader
to \cite{hs} and \cite{WV}.

In the following example, we show that, for $1$-dimensional
Cohen-Macaulay local rings, the Northcott type inequality does not
hold for Buchsbaum-Rim coefficients.
\begin{example}
Let $R=k[\![X,Y\!]]/(X^2)$ and $I=(x,y)$, where $x=\overline{X}$ and
$y=\overline{Y}$, and $k$ is a field. Then $R$ is a $1$-dimensional
Cohen-Macaulay local ring.
It can be seen that $\ell(R/I^n) = \ell(k[\![X,Y]\!]/(X^2,
(X,Y)^n)) = 2n - 1$. Therefore, $e_0 = 2$ and $e_1=1$.

Let $F=R\oplus R$ and $M=I\oplus I$. Then it follows from \cite[Theorem
2.5.2]{R} that the Buchsbaum-Rim polynomial of $M$ is given by
\begin{eqnarray*}
    BP(n)& =& [e_0 n- e_1]{n+1\choose 1}
    =2e_0{n+1\choose 2}- e_1{n\choose 1}- e_1 \\
	& = & 4 {n+1 \choose 2} - {n \choose 1} - 1.
\end{eqnarray*}

Hence we have $br_0(M) =4$ and $br_1(M) =1$. Therefore
$$
    br_0-br_1= 3 > 2 = \ell(F/M).
$$
\end{example}

Now we study the Buchsbaum-Rim polynomial of a special class of
modules, namely a direct sum of $\m$-primary ideals in a
Cohen-Macaulay local ring.
Let $(R,\m)$ be a $d$-dimensional Noetherian local ring and $\textbf
{I} = I_1,\ldots, I_r$ be a sequence of $\m$-primary ideals. For
$\underline{u} =
(u_1,\ldots , u_r)\in \mathbb{N}^r,$ let
$\textbf{I}^{\underline{u}}=I_1^{u_1}\cdots I_r^{u_r}$. Then $\ell
(R/\textbf{I}^{\underline{u}})$ is given by a polynomial
$P(\underline{u})$ in $r$ variables
of total degree $d$ for $u_i\gg 0$ for each $i$, \cite{bhat}. Write the Bhattacharya
polynomial of $\textbf{I}$ as
$$P_{\textbf{I}}(\underline{u})=\underset { { \alpha} \in \mathbb{N}^r,|\alpha|\le
d}\sum e_{\alpha}(\mathbf{I}){u_1 \choose {\alpha}_1}\cdots {u_r \choose {\alpha}_r}.$$
Here $e_{\alpha}(\bf I)$ with $|\alpha|=d$ are known as the mixed
multiplicities of $I_1, \ldots, I_r$.

For $i=0,\ldots,d$, set $E_i=\underset {\alpha \in \mathbb {N}^r,
|\alpha|=i}\sum e_{\alpha}(\bf I)$. Below, we obtain an expression for
the Buchsbaum-Rim multiplicity and the first Buchsbaum-Rim coefficient
in terms of the Bhattacharya coefficients.

\begin {proposition}\label{br-formula}
Let $(R,\m)$ be $d$-dimensional Cohen-Macaulay local ring, $I_1,
\ldots, I_r$ be $\m$-primary ideals and $M=I_1\oplus \cdots \oplus
I_r\subset R^r$.  If $\ell(R/\textbf{I}^{\underline{u}}) =
P_{\textbf{I}}(\underline{u})$ for
all $\underline{u} \in \NN^r$, then $br_0(M) = E_d$ and
$br_1(M)=(d-1)E_{d}-E_{d-1}.$
\end {proposition}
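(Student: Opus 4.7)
The plan is to exploit the direct-sum structure of $M$ to reduce the Buchsbaum--Rim function to a sum of Bhattacharya values, and then repackage the sum in closed form via a Chu--Vandermonde identity. Since $F = R^r$ and $M = I_1 \oplus \cdots \oplus I_r$, the natural map $\mathcal{S}(F) \to \R(M)$ splits componentwise: $\mathcal{S}_n(F) = \bigoplus_{|\underline{u}|=n} R$ and $\R_n(M) = \bigoplus_{|\underline{u}|=n} \textbf{I}^{\underline{u}}$, so
\[
BF(n) \;=\; \sum_{|\underline{u}|=n} \ell\bigl(R/\textbf{I}^{\underline{u}}\bigr).
\]
Invoking the standing hypothesis $\ell(R/\textbf{I}^{\underline{u}}) = P_{\textbf{I}}(\underline{u})$ for every $\underline{u} \in \NN^r$ and substituting the defining expansion of $P_{\textbf{I}}$, I would then swap the order of summation.

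Next, I would apply the convolution identity
\[
\sum_{u_1+\cdots+u_r=n} \binom{u_1}{\alpha_1} \cdots \binom{u_r}{\alpha_r} \;=\; \binom{n+r-1}{|\alpha|+r-1},
\]
which follows by induction on $r$ from the $r=2$ case (a one-line generating-function computation using $\sum_a \binom{a}{i} x^a = x^i/(1-x)^{i+1}$). Collecting terms by $|\alpha| = i$ and using $E_i = \sum_{|\alpha|=i} e_\alpha(\textbf{I})$ yields
\[
BF(n) \;=\; \sum_{i=0}^{d} E_i \binom{n+r-1}{i+r-1}, \quad n \ge 0,
\]
which, being a polynomial of degree $d+r-1$ agreeing with $BF$ for all $n \ge 0$, must be the Buchsbaum--Rim polynomial $BP_M(n)$.

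It then remains to read off $br_0(M)$ and $br_1(M)$ by matching the two leading coefficients. The right-hand side has degree $d+r-1$, with $n^{d+r-1}$-coefficient $E_d/(d+r-1)!$ coming solely from the $i=d$ term; this matches the leading coefficient $br_0(M)/(d+r-1)!$ of $BP_M(n)$ in the prescribed binomial basis and gives $br_0(M) = E_d$. For $br_1(M)$, I would expand $\binom{n+r-1}{d+r-1}$ and $\binom{n+r-1}{d+r-2}$ on the Bhattacharya side, and $\binom{n+d+r-2}{d+r-1}$ and $\binom{n+d+r-3}{d+r-2}$ on the Buchsbaum--Rim side, compute their $n^{d+r-2}$-coefficients via the falling-factorial product formula, and equate. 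After substituting $br_0(M)=E_d$ and simplifying, the claim $br_1(M)=(d-1)E_d - E_{d-1}$ should drop out.

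The main obstacle is purely arithmetic: the two binomial bases $\binom{n+r-1}{i+r-1}$ and $\binom{n+d+r-j-2}{d+r-j-1}$ differ in both their top and bottom shifts, so extracting $br_1(M)$ demands careful bookkeeping of the subleading coefficients of the relevant falling factorials (one must track a sum of the form $\sum_{k=1-d}^{r-1} k$, which conveniently factors as $(r-d)(r+d-1)/2$). Once the Chu--Vandermonde identity is in place, the proposition is essentially a transparent change of basis between two presentations of the same polynomial.
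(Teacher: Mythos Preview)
Your proposal is correct and follows essentially the same route as the paper: decompose $BF(n)$ as $\sum_{|\underline{u}|=n}\ell(R/\textbf{I}^{\underline{u}})$, substitute the Bhattacharya polynomial, swap the order of summation, apply the Chu--Vandermonde identity to obtain $BF(n)=\sum_{i=0}^{d}E_i\binom{n+r-1}{i+r-1}$, and then read off the first two Buchsbaum--Rim coefficients. The only divergence is in this last extraction step: where you propose to match the $n^{d+r-1}$ and $n^{d+r-2}$ coefficients by expanding the falling factorials (tracking the sum $\sum_{k=1-d}^{r-1}k$), the paper instead iterates Pascal's identity to write
\[
\binom{n+r-1}{d+r-1}=\binom{n+d+r-2}{d+r-1}-\left[\binom{n+d+r-3}{d+r-2}+\cdots+\binom{n+r-1}{d+r-2}\right],
\]
so that the bracketed $d-1$ terms immediately give the $-(d-1)E_d$ contribution to the coefficient of $\binom{n+d+r-3}{d+r-2}$, with no power-of-$n$ bookkeeping required. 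Both methods are valid; the Pascal trick is simply a cleaner way to execute the basis change you describe.
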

\begin{proof}
Let $BP(n)$ denote the Buchsbaum-Rim polynomial corresponding to the
function $BF(n)=\ell (\mathcal S_n(F)/\R_n(M))$. First note that
$\mathcal S(F)\cong R[t_1,\ldots,t_r]$ and $\R (M) \cong
R[I_1t_1,\ldots ,I_rt_r]$, where $t_1,\ldots, t_r$ are indeterminates
over $R$. Therefore $BF(n)= \underset{\underline{u}\in
\mathbb{N}^r,|\underline{u}|=n}\sum
\ell(R/\textbf{I}^{\underline{u}})$. Hence for all $n\in \mathbb{N}$ we have
  \begin{eqnarray*}
    BP(n)&=&BF(n)\\
	&=&\underset{\underline{u}\in \mathbb{N}^r,|\underline{u}|=n}\sum
	P_{\textbf{I}}(\underline{u})\\
	&=&{\underset{\underline{u}\in \mathbb{N}^r,|\underline{u}|=n}\sum} ~
	{\underset{\underline{\alpha}\in
	\mathbb{N}^r,|\underline{\alpha}|\le d}\sum }
	e_{\underline{\alpha}}(\textbf{I}){u_1 \choose {\alpha}_1}\cdots {u_r \choose {\alpha}_r}\\
	&=&\underset{\underline{\alpha}\in
	\mathbb{N}^r,|\underline{\alpha}|\le d}\sum
	e_{\underline{\alpha}}(\textbf{I}) \underset{\underline{u}\in
	\mathbb{N}^r,|\underline{u}|=n}\sum {u_1 \choose {\alpha}_1}\cdots {u_r \choose {\alpha}_r}\\
	&=&\underset{\underline{\alpha}\in
	\mathbb{N}^r,|\underline{\alpha}|\le d}\sum
	e_{\underline{\alpha}}(\textbf{I}){n+r-1\choose
	  |\underline{\alpha}|+r-1}\\
         &=&E_d{n+r-1\choose d+r-1}+E_{d-1}{n+r-1\choose d+r-2}+\cdots
  \end{eqnarray*}
  By using Pascal's identity repeatedly, we observe that
  $${n+r-1\choose d+r-1}={n+d+r-2\choose d+r-1}-\left[{n+d+r-3\choose
  d+r-2}+\cdots+{n+r-1\choose d+r-2}\right].$$
  Hence $BP(n)=E_d{n+d+r-2\choose d+r-1}+[E_{d-1}-(d-1)E_d]{n+d+r-3\choose d+r-2}+\cdots$.
  It follows that $br_0(M)=E_d$ and $br_1(M)=(d-1)E_d-E_{d-1}$.
\end{proof}
Note that if the $\R(M)$ Cohen-Macaulay, then by \cite[Theorem
6.1]{hyry}, $\ell(R/\textbf{I}^{\underline{u}}) = P_\textbf{I}(\underline{u})$ for
all $\underline{u} \in
\NN^r$ and hence $BF(n) = BP(n)$ for all $n \geq 0$.
As a consequence we obtain the equality $br_0(M) -
br_1(M) = \ell(F/M)$:
\begin{corollary}\label{br-equality1}
Let $(R,\mathfrak m)$ be a $2$-dimensional Cohen-Macaulay local ring
with infinite residue field. Let $I$ and $J$ be $\m$-primary ideals in
$R$. Let and $M = I \oplus J \subset R \oplus R$. If
$\mathcal R(M)$ is Cohen-Macaulay, then
$br_0(M)-br_1(M) = \ell (F/M)$.
\end{corollary}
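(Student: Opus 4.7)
The plan is to splice together Proposition \ref{br-formula} with the remark immediately preceding the corollary: namely, Cohen-Macaulayness of $\R(M)$ forces the equality $\ell(R/\mathbf{I}^{\underline{u}}) = P_{\mathbf{I}}(\underline{u})$ to hold for \emph{every} $\underline{u} \in \NN^2$, not merely for $u_i \gg 0$. This hypothesis is exactly what Proposition \ref{br-formula} asks for, and it will additionally let me evaluate the Bhattacharya polynomial at very small $\underline{u}$ to interpret the low-degree coefficients as lengths.

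First I would specialize Proposition \ref{br-formula} to $d=r=2$, which reads $br_0(M)=E_2$ and $br_1(M)=E_2-E_1$. Subtracting gives $br_0(M)-br_1(M)=E_1=e_{(1,0)}(\mathbf{I})+e_{(0,1)}(\mathbf{I})$, so the whole problem reduces to identifying the two degree-one Bhattacharya coefficients. Evaluating $P_{\mathbf{I}}$ at $\underline{u}=(0,0)$ kills every binomial term except the constant one, giving $P_{\mathbf{I}}(0,0)=e_{(0,0)}$; matched against $\ell(R/\mathbf{I}^{(0,0)})=\ell(R/R)=0$, this forces $e_{(0,0)}=0$. Evaluating at $(1,0)$ kills every term with $\alpha_2>0$ or $\alpha_1>1$, leaving $e_{(0,0)}+e_{(1,0)}$, which must equal $\ell(R/I)$; so $e_{(1,0)}=\ell(R/I)$ and symmetrically $e_{(0,1)}=\ell(R/J)$. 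Since $F/M\cong R/I\oplus R/J$, summing gives $E_1=\ell(F/M)$, which is the desired equality.

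There is no serious obstacle here; the corollary is really an accounting exercise once the two inputs are in hand. The only point that demands attention is that both inputs genuinely require the Cohen-Macaulayness hypothesis: without it, $\ell(R/\mathbf{I}^{\underline{u}})=P_{\mathbf{I}}(\underline{u})$ only holds asymptotically, and evaluating at $\underline{u}\in\{(0,0),(1,0),(0,1)\}$ would be illegitimate, so the passage from $E_1$ to $\ell(F/M)$ would collapse. Conceptually this also explains why later sections must work harder, replacing the Cohen-Macaulay Rees algebra by the Sally module machinery in order to recover the inequality (rather than equality) in full generality.
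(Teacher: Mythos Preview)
Your proof is correct and follows essentially the same approach as the paper: apply Proposition~\ref{br-formula} with $d=r=2$ to reduce to $br_0(M)-br_1(M)=e_{(1,0)}+e_{(0,1)}$, then identify these Bhattacharya coefficients with $\ell(R/I)$ and $\ell(R/J)$. The only minor difference is that the paper cites \cite[Theorem~6.3]{JV} directly for this identification, whereas you derive it yourself by evaluating $P_{\mathbf{I}}$ at $(0,0)$, $(1,0)$, $(0,1)$ using the Hyry result from the preceding remark; your argument is slightly more self-contained but otherwise equivalent.
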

\begin{proof}
By applying previous proposition with $d=2$ and $r=2$, we get
$br_0(M)-br_1(M)=E_2 - (E_2 - E_1) = E_1 =e_{10}+e_{01}$. Since
$\R(M)$ is Cohen-Macaulay, it follows from \cite[Theorem 6.3]{JV} that $e_{10} =
\ell(R/I)$ and $e_{01} = \ell(R/J)$. Therefore,
$br_0(M) - br_1(M) =\ell (R/I)+\ell (R/J)=\ell (F/M).$
\end{proof}

Note that the above Theorem can also be derived from Theorem
\ref{br-equality2}. We have provided the above proof as it is
independent and involves a different technique.
\begin{remark}\label{red}
Let $(R,\m)$ be a two dimensional Cohen-Macaulay local ring, $I_1,
\ldots, I_r$ be $\m$-primary ideals and $M = I_1 \oplus \cdots \oplus
I_r$. Let $\jr(I_i|I_j)$ denote the joint reduction number of $I_i$
and $I_j$ (we refer the reader to \cite{hyry1} and \cite{jkv} for
definition and some basic results concerning joint reductions).
It is proved in \cite[Corollary 4.5]{SUV} that if $\jr(I_i|I_j) = 0$
for any $i, j \in \{1, \ldots, r\}$, then $\R(M)$ is Cohen-Macaulay.
We would like to observe here that the converse is also true. Suppose
$\R(M)$ is Cohen-Macaulay. Then a modification of \cite[Theorem
6.1]{liu} gives that $\R(I_{i_1} \oplus \cdots \oplus I_{i_s})$ is
Cohen-Macaulay for any $\{i_1, \ldots, i_s\} \subset \{1, \ldots,
r\}$. In particular, $\R(I_i)$ is Cohen-Macaulay for each $i = 1,
\ldots, r$ and $\R(I_i \oplus I_j)$ is Cohen-Macaulay for $\{i,j\}
\subset\{1, \ldots, r\}$. This implies that $\jr(I_i|I_j) = 0$ for any
$1 \leq i, j \leq r$.
\end{remark}

In the following example, we compute the Buchsbaum-Rim coefficients.
\begin{example}
  Let $R=k[[X,Y]], I=\mathfrak m=(X,Y), J=(X^2,Y)$. Then
  $red(I)=red(J)=0$. Also $(Y)I+(X)J=IJ$ implying
  $\jr(I|J)=0$ so that the Rees algebra $R(I, J) \cong \R(I\oplus
  J)$ is Cohen-Macaulay by \cite[Theorem 6.3]{JV}.
  Set $F=R\oplus R$ and $M=I\oplus J.$ Therefore, we have
  $BF(n)=BP(n)$ for all $n$. Using any of the computational
  commutative algebra packages, it can be seen that
  $\ell (\mathcal S_1(F)/\mathcal R_1(M))=3, \ell (\mathcal S_2(F)/\mathcal R_2(M))=13, \ell (\mathcal S_3(F)/\mathcal R_3(M))=34, \ell (\mathcal S_4(F)/\mathcal R_4(M))=70.$
  In turn, we get the Buchsbaum-Rim polynomial as $BP(n)=4{n+2\choose
  3}-1{n+1\choose 2}$. Hence $br_0(M)-br_1(M)=4-1=3=\ell(F/M).$
\end{example}

D. Katz and V. Kodiyalam studied the Cohen-Macaulayness of the Rees
algebra of modules over two dimensional regular local rings. They
proved:
\begin{theorem}\cite[Corollary 4.2]{KK} \label{kk-thm}
  Let $(R,\mathfrak m)$ be a two dimensional regular local ring and M
  be a finitely generated torsion free R-module, then the following
  are equivalent:
  \begin{enumerate}
    \item $NM=\mathcal R_{2}(M)$ for every minimal reduction $N\subset
	  M$;
    \item The Rees algebra $\mathcal R(M)$ is Cohen-Macaulay;
    \item $\ell (\mathcal S_{n+1}(F)/\mathcal R_{n+1}(M))=
	  br_0(M){n+r+1\choose r+1}- \ell (M/N){n+r\choose r}$ for all
	  $n\geq 0$ and every minimal reduction $N\subset M$.
  \end{enumerate}
\end{theorem}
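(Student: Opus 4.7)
The plan is to prove the cyclic chain $(1) \Rightarrow (2) \Rightarrow (3) \Rightarrow (1)$. The crucial input from the two-dimensional regular hypothesis is that every minimal reduction $N$ of a torsion-free $R$-module $M \subset F = R^r$ of finite colength is a parameter module, generated by exactly $d+r-1 = r+1$ elements; moreover, since $R$ is regular of dimension $2$, the module $N$ has a free resolution of the form $0 \to R \to R^{r+1} \to N \to 0$, from which one obtains that the Rees algebra $\mathcal R(N)$ is Cohen-Macaulay of dimension $r+2$, with an explicit presentation via a Buchsbaum-Rim (Koszul-type) complex. Thus $\mathcal R(N)$ is a Cohen-Macaulay graded $R$-subalgebra of $\mathcal S(F) = R[t_1,\ldots,t_r]$, and $\mathcal R(M)$ is a finite graded $\mathcal R(N)$-module sitting between them.

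For $(1) \Rightarrow (2)$, the hypothesis $NM = \mathcal R_2(M)$ iterates to $\mathcal R_{n+1}(M) = N^{n}\,\mathcal R_1(M)$ for every $n \geq 1$, so $\mathcal R(M)$ is generated as a graded $\mathcal R(N)$-module by its degree-$0$ and degree-$1$ parts. This produces a short exact sequence
\[
0 \longrightarrow \mathcal R(N) \longrightarrow \mathcal R(M) \longrightarrow C \longrightarrow 0
\]
of graded $\mathcal R(N)$-modules, with $C$ generated in degree one by (a copy of) $M/N$. I would verify that $C$ is maximal Cohen-Macaulay over $\mathcal R(N)$ by identifying it, up to a degree shift, with an extension of $M/N$ along the fiber of $\mathcal R(N) \to \mathcal R(N)/\mathfrak m \mathcal R(N)$, and then apply the depth lemma to conclude Cohen-Macaulayness of $\mathcal R(M)$.

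For $(2) \Rightarrow (3)$, Cohen-Macaulayness of $\mathcal R(M)$ combined with that of $\mathcal S(F)$ forces the quotient $\mathcal S(F)/\mathcal R(M)$ to be Cohen-Macaulay of dimension $d+r-1 = r+1$, so its Hilbert function coincides with its Hilbert polynomial for every $n \geq 0$. Computing this polynomial by reducing modulo a regular sequence drawn from the homogeneous maximal ideal of $\mathcal R(N)$ yields the explicit formula in $(3)$ with leading coefficient $br_0(M) = \ell(F/N)$ and subleading coefficient $\ell(M/N)$. For $(3) \Rightarrow (1)$, evaluating the polynomial identity at $n = 1$ gives a specific value of $\ell(\mathcal S_2(F)/\mathcal R_2(M))$; an independent computation of $\ell(\mathcal S_2(F)/NM)$ using the explicit Buchsbaum-Rim resolution of the parameter module $N$ matches this value exactly, forcing the containment $NM \subseteq \mathcal R_2(M)$ to be an equality.

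The main obstacle will be the Cohen-Macaulayness step $(1) \Rightarrow (2)$: verifying that the cokernel $C$ has the correct depth over $\mathcal R(N)$, rather than merely over $R$, requires careful bookkeeping with the $\mathcal R(N)$-module structure inherited from the reduction number hypothesis, and in particular the freeness of $N$ over $R$. The remaining implications are essentially Hilbert series manipulations; alternatively, one could attack $(1) \Leftrightarrow (3)$ directly via the Sally-module analogue developed in Section~3 of this paper, which encodes precisely the relevant graded difference $\mathcal R(M)/\mathcal R(N)$ in each degree.
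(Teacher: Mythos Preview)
The paper does not prove Theorem~\ref{kk-thm} at all: it is quoted verbatim as a result of Katz--Kodiyalam \cite[Corollary~4.2]{KK}. What the paper \emph{does} prove is the single implication $(1)\Rightarrow(3)$, generalized from two-dimensional regular to two-dimensional Cohen--Macaulay local rings, as Theorem~\ref{br-equality2}. So there is no full cyclic proof in the paper to compare your proposal against; only that one implication admits a comparison.

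For $(1)\Rightarrow(3)$, the paper's route differs substantially from yours. You pass through $(2)$ and invoke Cohen--Macaulayness of $\mathcal S(F)/\mathcal R(M)$ to force the Hilbert function to agree with its polynomial. The paper instead argues directly: from $\mathcal R_{n+1}(M)=M\mathcal R_n(N)$ it writes down two short exact sequences relating $\mathcal S_{n+1}(F)/\mathcal R_{n+1}(M)$ to $\mathcal S_{n+1}(F)/\mathcal R_{n+1}(N)$, $F\mathcal R_n(N)/M\mathcal R_n(N)$, and $F\mathcal R_n(N)/\mathcal R_{n+1}(N)$, and computes the latter two lengths via Lemma~\ref{lemmaKK}. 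That lemma, in turn, is powered by the Hayasaka--Hyry structure theorem (Theorem~\ref{hyry-perfect}) for $G_1(N)$ when $N$ is a parameter module --- this replaces the Buchsbaum--Rim resolution you would use. The upshot is an explicit length formula valid for every $n\ge 0$, obtained with no depth bookkeeping whatsoever. This completely sidesteps what you correctly identify as your ``main obstacle,'' the verification that the cokernel $C=\mathcal R(M)/\mathcal R(N)$ is maximal Cohen--Macaulay over $\mathcal R(N)$. On the other hand, your cyclic scheme --- closer to the original Katz--Kodiyalam argument and to \cite[Proposition~4.4]{SUV}, which the paper cites for $(1)\Leftrightarrow(2)$ over Cohen--Macaulay base --- would deliver all three equivalences, whereas the paper's direct method yields only the one implication.
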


Since $N$ is a parameter module and a minimal reduction of $M$, $br_0(M)=br_0(N)=\ell (F/N)$,
\cite[Theorem 3.1]{BUV}. Hence in this case $br_0(M)-br_1(M)=\ell (F/N)-\ell (M/N)=\ell (F/M)$.
A. Simis, B. Ulrich and W. V. Vasconcelos proved
that if $(R,\m)$ is a two dimensional Cohen-Macaulay local ring and $M
\subset F=R^r$ is a module with $\ell(F/M) < \infty$, then $\R(M)$ is
Cohen-Macaulay if and only if $\red(M) \leq 1$, \cite[Proposition 4.4]{SUV}. By adopting the proof of
Katz and Kodiyalam, we prove (1)
implies (3) of the above theorem in the case of $2$-dimensional of Cohen-Macaulay rings.
Though the proof works on the same lines, the two isomorphisms used in
the proof are justified by a result of F. Hayasaka and E. Hyry. We recall the
result from \cite {HH}.
For an $R$-module $M$, let $\widetilde{M}$ denote the matrix whose
columns correspond to the generators of $M$ with respect to a fixed
basis of $F$. The matrix $\widetilde{M}$ is said to be perfect if the
zeroth Fitting ideal of $M$ is a proper ideal with maximal grade.

\begin{theorem}\cite [Theorem 4.4] {HH}\label{hyry-perfect}
  Let $R$ be a Noetherian ring and $F$ an $R$-free module of rank $r>0$. Let $M$ be a submodule of $F$ such that $\widetilde{M}$ is a perfect matrix of size $r\times (r+1)$. Then the natural surjective homomorphism
  $$\phi_{1}: (F/M)[Y_1,\ldots, Y_{r+1}] \rightarrow  G_1(M)$$
  is an isomorphism, where $G_1(M)=F\mathcal R(M)/\mathcal R(M)^{+}.$
  \\In particular the $R$-module $F\mathcal R_n(M)/\mathcal R_{n+1}(M)$ is a direct sum of ${n+r\choose r}$ copies of $F/M$.
\end{theorem}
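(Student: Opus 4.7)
The plan is to use the Hilbert-Burch structure afforded by the perfect matrix hypothesis to pin down $\R(M)$ as a quotient of a polynomial ring, and then identify the graded pieces of $F\R(M)/\R(M)^+$ by descent.

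First, I would apply the Hilbert-Burch/Buchsbaum-Eisenbud structure theorem. Writing the columns of $\widetilde M$ as $m_1,\ldots,m_{r+1}$ and letting $\Delta_i$ denote the signed maximal $r$-minor obtained by deleting column $i$, the hypothesis $\operatorname{grade}(I_r(\widetilde M)) = 2$ yields the exact sequence
\[
0 \longrightarrow R \xrightarrow{(\Delta_1,\ldots,\Delta_{r+1})^{T}} R^{r+1} \xrightarrow{\widetilde M} F \longrightarrow F/M \longrightarrow 0,
\]
so the only $R$-syzygy among the $m_i$ is $\sum_i \Delta_i m_i = 0$. Standard linear-type criteria for modules of projective dimension one satisfying the appropriate Fitting-ideal condition (cf.\ Simis-Vasconcelos, Herzog-Simis-Vasconcelos) then give $\R(M) \cong S/(L)$ as graded $R$-algebras, where $S = R[Y_1,\ldots,Y_{r+1}]$ and $L = \sum_i \Delta_i Y_i$. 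Moreover, the $\Delta_i$ lie in $I_r(\widetilde M) \subseteq \operatorname{Ann}_R(F/M)$.

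Next I would analyze $\phi_1$ degree by degree. Surjectivity of $\phi_1$ is immediate from its construction. For injectivity, consider the composite
\[
\beta_n\colon F \otimes_R S_n \xrightarrow{1 \otimes \psi_n} F \otimes_R \R_n(M) \xrightarrow{\mu} F\R_n(M) \longrightarrow F\R_n(M)/\R_{n+1}(M),
\]
with $\psi_n$ the linear-type surjection (kernel $LS_{n-1}$) and $\mu$ the multiplication inside $\mathcal S(F)$. Since $\R_{n+1}(M) = M\R_n(M) = \mu(M \otimes_R \R_n(M))$, a diagram chase confines $\ker \beta_n$ to $M \otimes_R S_n + F \otimes_R LS_{n-1}$; the latter summand projects to zero in $(F/M) \otimes_R S_n$, because $LS_{n-1} \subseteq I_r(\widetilde M)\cdot S_n$ and $I_r(\widetilde M)$ annihilates $F/M$. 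Thus $\phi_1$ is injective, and the ``in particular'' conclusion follows by counting: $S_n$ is $R$-free of rank $\binom{n+r}{r}$, so $F\R_n(M)/\R_{n+1}(M) \cong (F/M)^{\binom{n+r}{r}}$.

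The main obstacle is the containment $\ker \beta_n \subseteq M \otimes_R S_n + F \otimes_R LS_{n-1}$. Beyond the ``obvious'' relations in $\ker \psi_n = LS_{n-1}$, the multiplication $\mu$ contributes Koszul-type relations $u \otimes vw - v \otimes uw$ arising from the commutativity of $\mathcal S(F)$, and demonstrating that every such relation pulls back into $M \otimes_R S_n$ requires delicate bookkeeping---most naturally carried out via the symmetric Buchsbaum-Rim (Eagon-Northcott) complex of $\widetilde M$, whose exactness under the perfect matrix hypothesis controls the relations in every symmetric degree.
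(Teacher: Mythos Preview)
The paper does not prove this theorem: it is quoted verbatim from Hayasaka--Hyry \cite[Theorem~4.4]{HH} and used as a black box (via the subsequent Remark) in the proof of Lemma~\ref{lemmaKK}. There is therefore no ``paper's own proof'' to compare against; any argument you supply is necessarily going beyond what the present paper does.

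Regarding your sketch on its own merits: the Hilbert--Burch step and the identification of the single syzygy $\sum_i \Delta_i m_i = 0$ are correct, and the observation that $I_r(\widetilde M)$ annihilates $F/M$ is the right ingredient. However, you have correctly flagged but not closed the real gap. The assertion $\R(M)\cong S/(L)$ (linear type) does not follow merely from ``projective dimension one plus a Fitting condition'' without further argument in this generality, and even granting it, the containment $\ker\beta_n \subseteq M\otimes_R S_n + F\otimes_R L S_{n-1}$ is precisely the content of the theorem: the Koszul-type elements $u\otimes vw - v\otimes uw$ you mention are not obviously in $M\otimes_R S_n$, and showing they reduce there is equivalent to the injectivity you are trying to prove. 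Your closing remark that this is ``most naturally carried out via the symmetric Buchsbaum--Rim (Eagon--Northcott) complex'' is in fact how Hayasaka and Hyry proceed in \cite{HH}: they build a family of complexes whose acyclicity under the grade-$2$ hypothesis yields the isomorphism directly. As written, your proposal is an outline that locates the difficulty accurately but defers its resolution to exactly the machinery the cited paper develops.
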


\begin{remark}
It is known that if $M$ is a parameter module, then the matrix
$\widetilde{M}$ is perfect, \cite{HH}. So in particular, when the ring $R$ is a two
dimensional Cohen-Macaulay local ring and $M$ is a parameter module,
above theorem is true, \cite[Corollary 4.5]{HH}.
\end{remark}
\begin{lemma}\label {lemmaKK}
Let $(R,\mathfrak m)$ be a two dimensional Cohen-Macaulay local ring with
infinite residue field and $M\subset F=R^r$ be a finitely generated
$R$-module with $\ell(F/M) < \infty$. Let $N\subset M$ be a minimal
reduction generated by $\{c_1, \ldots, c_{r+1}\}$.
If $k={n+r\choose r}$ and $\phi: F^k \rightarrow F\mathcal
R_n(N)$ be the surjective $R$-module homomorphism defined by
$\phi(f_1,\ldots, f_k) =
\underset {i_1+\cdots +i_{r+1}=n}{\underset {i=1} {\overset {k}
\sum}} f_i c_1^{i_1}c_2^{i_2}\cdots c_{r+1}^{i_{r+1}}$, then the
corresponding induced maps
    $$\phi_1: \left(\frac {F}{N}\right)^k\rightarrow \frac {F\mathcal
	R_n(N)}{\mathcal R_{n+1}(N)}\hspace*{0.7cm} and \hspace*{0.7cm}
      \phi_2: \left(\frac {F}{M}\right)^k\rightarrow \frac {F\mathcal R_n(N)}{M\mathcal R_n(N)} $$
      are isomorphisms.
\end{lemma}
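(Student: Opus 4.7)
The plan is to reduce both isomorphism statements to a single application of the Hayasaka-Hyry theorem (Theorem \ref{hyry-perfect}). Since $\dim R = 2$ and $N$ is a minimal reduction generated by $r+1 = d+r-1$ elements with $\ell(F/N) < \infty$, the submodule $N \subset F$ is a parameter module. By the remark preceding this lemma, $\widetilde{N}$ is a perfect matrix of size $r \times (r+1)$, so Theorem \ref{hyry-perfect} applies to $N$ and gives $G_1(N) \cong (F/N)[Y_1,\ldots,Y_{r+1}]$ via the natural map sending $Y_j$ to $c_j$.

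First I would extract $\phi_1$ from this. Restricting the Hayasaka-Hyry isomorphism to the degree-$n$ component of $G_1(N)$ produces an $R$-module isomorphism $(F/N)^k \to F\R_n(N)/\R_{n+1}(N)$, and unwinding the correspondence $Y_j \leftrightarrow c_j$ shows that it sends $(\bar f_1,\ldots,\bar f_k)$ to $\sum_{i=1}^{k} f_i c_1^{i_1}\cdots c_{r+1}^{i_{r+1}} \bmod \R_{n+1}(N)$, which is precisely the map $\phi_1$ of the lemma. Hence $\phi_1$ is an isomorphism.

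Next, for $\phi_2$, I would use that $N \subseteq M$ forces $\R_{n+1}(N) = N\R_n(N) \subseteq M\R_n(N)$, giving a commutative square with $\phi_1$ on top, $\phi_2$ on the bottom, and surjective vertical quotient maps whose kernels are $(M/N)^k$ on the left and $M\R_n(N)/\R_{n+1}(N)$ on the right. Since $\R_n(N)$ is generated as an $R$-module by the monomials $c_1^{i_1}\cdots c_{r+1}^{i_{r+1}}$ with $i_1+\cdots+i_{r+1}=n$, a direct check gives $\phi(M^k) = M\R_n(N)$; together with the fact that $\phi_1$ is already an isomorphism, this identifies $(M/N)^k$ with $M\R_n(N)/\R_{n+1}(N)$ under $\phi_1$. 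A standard comparison of kernels then forces $\phi_2$ to be an isomorphism as well.

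The main obstacle is effectively absorbed by invoking Theorem \ref{hyry-perfect}: the essential content is that a parameter module in a two-dimensional Cohen-Macaulay local ring admits a perfect presentation, which activates the Hayasaka-Hyry description of $G_1(N)$. Once that input is granted, both $\phi_1$ and $\phi_2$ come out by formal manipulations, with $\phi_2$ reduced to a routine diagram chase.
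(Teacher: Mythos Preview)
Your proof is correct and follows essentially the same approach as the paper: both obtain $\phi_1$ directly from the Hayasaka--Hyry theorem applied to the parameter module $N$, and both deduce $\phi_2$ from the injectivity of $\phi_1$ together with the observation that $\phi(M^k)=M\mathcal{R}_n(N)$. The only cosmetic difference is that the paper verifies injectivity of $\phi_2$ by an explicit element chase (if $\sum f_i c^{\underline{i}}=\sum g_i c^{\underline{i}}$ with $g_i\in M$, then $\phi_1(\widetilde{f_i-g_i})=0$ forces $f_i\in M$), whereas you package the same computation as a five-lemma/diagram argument on the short exact sequences with kernels $(M/N)^k$ and $M\mathcal{R}_n(N)/\mathcal{R}_{n+1}(N)$.
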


\begin{proof} It follows from the previous remark that
$\phi_1$ is an isomorphism. Surjectivity of $\phi_2$ is clear. For an
element $f \in F$, let $\bar{f}$ denote its image in $F/M$ and
$\widetilde{f}$ denote its image in $F/N$.
Suppose $\phi_2(\bar{f_1},\ldots , \bar{f_k})=0$. This implies
$$\underset {i_1+\cdots +i_{r+1}=n}{\underset {i=1} {\overset {k}
\sum}} f_i c_1^{i_1}c_2^{i_2}\cdots c_{r+1}^{i_{r+1}} =
  \underset {i_1+\cdots +i_{r+1}=n}{\underset {i=1} {\overset {k}
\sum}} g_i c_1^{i_1}c_2^{i_2}\cdots c_{r+1}^{i_{r+1}} ~for~ some ~
g_i\in M.$$ This implies that $\phi_1(\widetilde{f_1-g_1},\ldots
,\widetilde {f_k-g_k})=0$.
Since $\phi_1$ is injective, it follows that $f_i-g_i\in N\subset M$
for all $i = 1, \ldots k$. Hence $f_i\in M ~for~ i=1,\ldots , k.$
\end{proof}

Now we prove (1) implies (3) in Theorem \ref{kk-thm} for two
dimensional Cohen-Macaulay rings.
\begin{theorem}\label{br-equality2}
Let $(R,\mathfrak m)$ be a two dimensional Cohen-Macaulay local ring
with infinite residue field and $M\subset F=R^r$ be a finitely
generated $R$-module with $\ell(F/M)<\infty$. If $\red_N(M) = 1$ for a
minimal reduction $N \subset M$, then for all $n\geq 0$,
$$\ell (\mathcal S_{n+1}(F)/\mathcal R_{n+1}(M))= \ell
(F/N){n+r+1\choose r+1}- \ell (M/N){n+r\choose r}.$$
In particular, if for any minimal reduction $N$ of $M$ $\red_N(M) =
1$, then $br_0(M) - br_1(M) = \ell(F/M)$ and $br_i(M) = 0$ for all $i
= 2, \ldots, r+1$.
\end{theorem}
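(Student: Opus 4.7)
The plan is to mimic the proof of Theorem \ref{kk-thm}(1)$\Rightarrow$(3) of Katz and Kodiyalam, using Lemma \ref{lemmaKK} in place of the Eagon-Northcott input available only over regular rings. The first move is to convert the reduction-number hypothesis into an explicit identity of graded pieces of the Rees algebra: by induction on $n$ I would show that
$$\R_{n+1}(M)=M\R_n(N)\quad\text{for every }n\geq 0,$$
where $\R_0(N)=R$. The base case is $\R_1(M)=M=M\cdot R$, and the induction step uses $\red_N(M)=1$ together with the inductive hypothesis: $\R_{n+1}(M)=N\R_n(M)=N(M\R_{n-1}(N))=M(N\R_{n-1}(N))=M\R_n(N)$.

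With this identity I would compute $\ell(\mathcal S_{n+1}(F)/\R_{n+1}(M))$ by telescoping along the chain $\R_{n+1}(N)\subset \R_{n+1}(M)\subset \mathcal S_{n+1}(F)$. The middle gap $\ell(\R_{n+1}(M)/\R_{n+1}(N))=\ell(M\R_n(N)/N\R_n(N))$ is controlled by Lemma \ref{lemmaKK}: the short exact sequence
$$0\to \frac{M\R_n(N)}{N\R_n(N)}\to \frac{F\R_n(N)}{N\R_n(N)}\to \frac{F\R_n(N)}{M\R_n(N)}\to 0,$$
together with the isomorphisms $\phi_1\colon(F/N)^k\cong F\R_n(N)/N\R_n(N)$ and $\phi_2\colon(F/M)^k\cong F\R_n(N)/M\R_n(N)$, where $k=\binom{n+r}{r}$, gives
$$\ell(\R_{n+1}(M)/\R_{n+1}(N))=\binom{n+r}{r}\bigl(\ell(F/N)-\ell(F/M)\bigr)=\binom{n+r}{r}\ell(M/N).$$

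The remaining piece $\ell(\mathcal S_{n+1}(F)/\R_{n+1}(N))$ is the Buchsbaum-Rim function of the parameter module $N$, and here I expect the principal obstacle: the associated-graded information supplied by Theorem \ref{hyry-perfect} gives only the individual quotient $F\R_n(N)/\R_{n+1}(N)$, whereas what is needed is the full length from degree $0$ on. The input I would invoke is the fact, contained in \cite{HH} (with the perfect-matrix ingredient recorded in \cite[Corollary 4.5]{HH}), that in a two-dimensional Cohen-Macaulay local ring the Buchsbaum-Rim function of a parameter module coincides with its Buchsbaum-Rim polynomial for all $n\geq 0$; combined with $br_0(N)=\ell(F/N)$ from \cite[Theorem 3.1]{BUV} and the vanishing of the higher Buchsbaum-Rim coefficients of a parameter module, this produces
$$\ell(\mathcal S_{n+1}(F)/\R_{n+1}(N))=\ell(F/N)\binom{n+r+1}{r+1}\quad\text{for every }n\geq 0.$$
Subtracting the earlier identity then yields the displayed formula for $\ell(\mathcal S_{n+1}(F)/\R_{n+1}(M))$. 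The ``in particular'' clause follows by matching the resulting polynomial with the specialization of $BP_M(n+1)=\sum_{i=0}^{r+1}(-1)^i br_i(M)\binom{n+r+1-i}{r+1-i}$ to $d=2$: this forces $br_0(M)=\ell(F/N)$, $br_1(M)=\ell(M/N)$, and $br_i(M)=0$ for $2\leq i\leq r+1$, so that $br_0(M)-br_1(M)=\ell(F/N)-\ell(M/N)=\ell(F/M)$.
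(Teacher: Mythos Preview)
Your proposal is correct and follows essentially the same route as the paper: the identity $\mathcal R_{n+1}(M)=M\mathcal R_n(N)$, Lemma~\ref{lemmaKK} for the two quotients $F\mathcal R_n(N)/\mathcal R_{n+1}(N)$ and $F\mathcal R_n(N)/M\mathcal R_n(N)$, and the explicit Buchsbaum--Rim function of the parameter module $N$. The only cosmetic differences are that the paper telescopes through $F\mathcal R_n(N)$ rather than through $\mathcal R_{n+1}(N)$ (which amounts to the same arithmetic), and it obtains $\ell(\mathcal S_{n+1}(F)/\mathcal R_{n+1}(N))=br_0(N)\binom{n+r+1}{r+1}$ directly from \cite[Theorem~3.4]{BUV}, so the ``principal obstacle'' you flag is not actually an obstacle.
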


\begin{proof}
Since $\red_N(M)$ is one, we have $\mathcal R_2(M)=N\mathcal R_1(M)$.
This implies $\mathcal R_{n+1}(M)=N\mathcal R_n(M)$ for all $n\ge 1$.
By induction, one can see that $\mathcal R_{n+1}(M)=M\mathcal R_n(N)$
for all $n\ge 0$. Consider the following short exact sequences of
R-modules with natural maps
$$0\longrightarrow \frac {\mathcal S_1(F)\mathcal R_n(N)}{\mathcal
R_1(M)\mathcal R_n(N)}\longrightarrow \frac{\mathcal S_{n+1}(F)}{\mathcal
R_{n+1}(M)} \longrightarrow \frac{\mathcal S_{n+1}(F)}{\mathcal
S_1(F)\mathcal R_n(N)} \longrightarrow 0,$$
$$0\longrightarrow \frac {\mathcal S_1(F)\mathcal R_n(N)}{\mathcal
R_{n+1}(N)}\longrightarrow \frac{\mathcal S_{n+1}(F)}{\mathcal R_{n+1}(N)}
\longrightarrow \frac{\mathcal S_{n+1}(F)}{\mathcal S_1(F)\mathcal R_n(N)}
\longrightarrow 0.$$
By additivity of the length function on short exact sequences, we get
$$\ell \left(\frac{\mathcal S_{n+1}(F)}{\mathcal R_{n+1}(M)}\right)  =
\ell\left(\frac{\mathcal S_{n+1}(F)}{\mathcal R_{n+1}(N)}\right) + \ell
\left(\frac {\mathcal S_1(F)\mathcal R_n(N)}{\mathcal R_1(M)\mathcal
R_n(N)}\right) - \ell \left(\frac {\mathcal S_1(F)\mathcal
R_n(N)}{\mathcal R_{n+1}(N)}\right).$$
Let $k={n+r\choose r}$. By Lemma \ref {lemmaKK}, $\left(\frac
{F}{M}\right)^k\cong \frac {F\mathcal R_n(N)}{M\mathcal R_n(N)}$ and
$\left(\frac {F}{N}\right)^k\cong \frac {F\mathcal R_n(N)}{\mathcal
  R_{n+1}(N)}$.
Hence $\ell (\frac {F\mathcal R_n(N)}{M\mathcal R_n(N)})= \ell
(F/M){n+r\choose r}$ and $\ell (\frac {F\mathcal R_n(N)}{\mathcal
  R_{n+1}(N)})= \ell (F/N){n+r\choose r}$.
Since $N$ is a parameter module, by \cite [Theorem 3.4]{BUV}, $\ell
(\mathcal S_{n+1}(F)/\mathcal R_{n+1}(N))=br_0(N){n+r+1\choose r+1}=br_0(M){n+r+1\choose r+1}$.
  \\Therefore
  \begin{eqnarray*}
    \ell \left(\frac{\mathcal S_{n+1}(F)}{\mathcal R_{n+1}(M)}\right)
    &=& br_0(M){n+r+1\choose r+1}+[\ell (F/M)-\ell (F/N)]{n+r\choose r}\\
    &=& br_0(M){n+r+1\choose r+1}-\ell(M/N){n+r\choose r}\\
    &=& \ell (F/N){n+r+1\choose r+1}-\ell(M/N){n+r\choose r}.
  \end{eqnarray*}
The second assertion now follows from the above equality.
\end{proof}
The main hurdle in proving a $d$-dimensional version of the above
theorem is in generalizing Theorem \ref{hyry-perfect}, which is not
known for modules $M$ with
$\widetilde{M}$ being a perfect matrix of size $r \times (d+r-1)$,
where $d = \dim R$.

\section{Main Result}
In this section, we prove an analogue of the Northcott inequality for
submodules of free modules over $2$-dimensional Cohen-Macaulay rings,
which have finite co-length.
W. V. Vasconcelos introduced the notion of Sally modules $S_J(I)$,
where $I$ is an ideal with a reduction $J,$ to study the interplay
between the depth properties of the blowup algebras and the properties
of the Hilbert-Samuel coefficients. The Sally module
$S_J(I)$ of $I$ with respect to $J$ is the $\mathcal R(J)$-module
defined by the following short exact sequence
$$0\rightarrow I\mathcal R(J)\rightarrow I\mathcal R(I)\rightarrow
S_J(I):={\underset {n\ge 0}\oplus}I^{n+1}/IJ^n\rightarrow0.$$
We refer the reader to \cite{WV} for basic properties of Sally
modules.
This definition can be extended to inclusion of graded algebras,
\cite{WV}.
As we have $\oplus_n\mathcal R_n(N)\subseteq \oplus_n \mathcal R_n(M)$ for
any reduction $N$ of $M$, we define the Sally module in an analogous
manner:

\begin{definition}
Let $(R,\mathfrak m)$ be a Noetherian local ring and $M \subset F =
R^r$ be a finitely generated $R$-module. Let $N\subset M$
be a $R$-submodule. Then Sally module of $M$ with respect to $N$
is defined as $ S_N(M):=\underset {n\ge 1}\oplus \frac {\mathcal
{R}_{n+1}(M)}{M\mathcal {R}_{n}(N)}.$
\end{definition}

We note that $S_N(M)$ is zero if and only if $\red_N(M)$ is at most one.
Note also that $\R(N)$ is a finitely generated standard graded algebra over $R$
and $S_N(M)$ is a finitely generated module over $\mathcal R(N)$.
Suppose $M \subset F=R^r$ is such that $\ell(F/M) < \infty$ and $N$
is a minimal reduction of $M$. Then the Hilbert function theory for
graded modules says that Hilbert function, $H(n)=\ell _R\left(\frac
{\mathcal {R}_{n+1}(M)}{M\mathcal {R}_{n}(N)}\right)$ is given by a
polynomial for $n \gg 0$ of degree equal to the dimension of $S_N(M)$.
Since $\m \R(N)\subset \mathfrak{p}$ for all $\mathfrak{p}\in Ass (S_N(M))$ it follows that $\dim S_N(M) \leq d+r-1$.
In the following Theorem we relate Hilbert function of $S_N(M)$ and Buchsbaum-Rim function of module $M$ in 2 dimensional Cohen-Macaulay ring.
As a consequence we obtain the Northcott inequality. The proof is
analogous to the corresponding results in  Section 2.1.2 of \cite {WV}.
\begin{theorem}
Let $(R,\mathfrak m)$ be a Cohen-Macaulay local ring of dimension $2$
with infinite residue field and $M\subseteq F=R^r$ with $\ell (F/M) <
\infty.$ Let the Buchsbaum-Rim polynomial corresponding to the
Buchsbaum-Rim function $BF(n)=\ell \left(\frac {\mathcal S_n(F)}{\mathcal
R_n(M)}\right)$ be given by
$$BP(n) = br_0(M){n+r \choose r+1}-br_1(M) {n+r-1\choose r}+\cdots +
(-1)^{r+1}br_{r+1} (M).$$
Suppose $N\subseteq M$ is a minimal reduction and $S=S_N(M)$ be the
corresponding Sally module, then for all $n\ge 0,$
$$BF(n)=br_0(M){n+r \choose r+1} +[\ell (F/M)-br_0(M)]{n+r-1\choose
r}-\ell (S_{n-1}).$$
\end{theorem}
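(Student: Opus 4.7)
The strategy is to compute $BF(n)$ by filtering $\mathcal{S}_n(F)$ through the intermediate submodule $M\mathcal{R}_{n-1}(N)$. For $n \geq 1$ we have $M\mathcal{R}_{n-1}(N) \subseteq \mathcal{R}_n(M) \subseteq \mathcal{S}_n(F)$, and additivity of length applied to
\[
0 \longrightarrow \frac{\mathcal{R}_n(M)}{M\mathcal{R}_{n-1}(N)} \longrightarrow \frac{\mathcal{S}_n(F)}{M\mathcal{R}_{n-1}(N)} \longrightarrow \frac{\mathcal{S}_n(F)}{\mathcal{R}_n(M)} \longrightarrow 0
\]
yields $BF(n) = \ell\bigl(\mathcal{S}_n(F)/M\mathcal{R}_{n-1}(N)\bigr) - \ell(S_{n-1})$, since the leftmost term is precisely the $(n-1)$-st graded component of $S_N(M)$ (with the convention $S_{-1}=S_0=0$). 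The case $n=0$ is handled trivially, as $BF(0)=0$ and every term on the right-hand side vanishes.

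The remaining task is to evaluate $\ell\bigl(\mathcal{S}_n(F)/M\mathcal{R}_{n-1}(N)\bigr)$ by inserting $F\mathcal{R}_{n-1}(N)$ as a second intermediate layer and exploiting the fact that $N$ is a parameter module. The two short exact sequences
\[
0 \to \frac{F\mathcal{R}_{n-1}(N)}{M\mathcal{R}_{n-1}(N)} \to \frac{\mathcal{S}_n(F)}{M\mathcal{R}_{n-1}(N)} \to \frac{\mathcal{S}_n(F)}{F\mathcal{R}_{n-1}(N)} \to 0,
\]
\[
0 \to \frac{F\mathcal{R}_{n-1}(N)}{\mathcal{R}_n(N)} \to \frac{\mathcal{S}_n(F)}{\mathcal{R}_n(N)} \to \frac{\mathcal{S}_n(F)}{F\mathcal{R}_{n-1}(N)} \to 0
\]
share the common quotient $\mathcal{S}_n(F)/F\mathcal{R}_{n-1}(N)$, which is eliminated by subtraction, reducing the problem to computing three lengths attached to the parameter module $N$.

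Lemma \ref{lemmaKK} applied at degree $n-1$ identifies $F\mathcal{R}_{n-1}(N)/M\mathcal{R}_{n-1}(N) \cong (F/M)^{\binom{n+r-1}{r}}$ and $F\mathcal{R}_{n-1}(N)/\mathcal{R}_n(N) \cong (F/N)^{\binom{n+r-1}{r}}$, so their lengths are $\ell(F/M)\binom{n+r-1}{r}$ and $\ell(F/N)\binom{n+r-1}{r}$ respectively. Because $N$ is a parameter module, \cite[Theorem 3.4]{BUV} gives $\ell\bigl(\mathcal{S}_n(F)/\mathcal{R}_n(N)\bigr) = br_0(N)\binom{n+r}{r+1}$ for every $n \geq 0$, and \cite[Theorem 3.1]{BUV} together with the fact that $N$ is a reduction of $M$ supplies $br_0(N)=br_0(M)=\ell(F/N)$. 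Substituting and simplifying $\ell(F/M)-\ell(F/N)$ to $\ell(F/M)-br_0(M)$ produces the asserted equality.

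The main delicate point is to secure the identity for \emph{every} $n\geq 0$ rather than only asymptotically, since the whole purpose of the statement is to read off $\ell(S_{n-1})$ exactly, graded piece by graded piece. This precision is precisely what Lemma \ref{lemmaKK} delivers; its validity ultimately rests on the Hayasaka-Hyry perfect matrix Theorem \ref{hyry-perfect} combined with the observation that in a two-dimensional Cohen-Macaulay ring a minimal reduction of $M$ is generated by $r+1$ elements whose associated matrix is perfect. A brief check of the boundary cases $n=0,1$, where the binomials $\binom{r}{r+1}$ and $\binom{r-1}{r}$ vanish and $S_{-1}=S_0=0$, confirms that the Sally-module indexing introduces no off-by-one discrepancy.
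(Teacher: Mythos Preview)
Your proof is correct and follows essentially the same approach as the paper's: both arguments reduce $BF(n)$ to the four lengths $\ell(\mathcal{S}_n(F)/\mathcal{R}_n(N))$, $\ell(F\mathcal{R}_{n-1}(N)/M\mathcal{R}_{n-1}(N))$, $\ell(F\mathcal{R}_{n-1}(N)/\mathcal{R}_n(N))$, and $\ell(S_{n-1})$, and evaluate the first three via Lemma~\ref{lemmaKK} and \cite[Theorem 3.4]{BUV}. The only cosmetic difference is the bookkeeping: the paper's pair of short exact sequences shares the common \emph{submodule} $M\mathcal{R}_{n-1}(N)/\mathcal{R}_n(N)$, whereas yours shares the common \emph{quotient} $\mathcal{S}_n(F)/F\mathcal{R}_{n-1}(N)$; either cancellation yields the same identity.
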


\begin{proof}

Consider the following two short exact sequences of $R$-modules
$$0\longrightarrow \frac{M\mathcal R_{n-1}(N)}{\mathcal R_n(N)}\longrightarrow
\frac{\mathcal R_n(M)}{\mathcal R_n(N)}\longrightarrow \frac{\mathcal
R_n(M)}{M\mathcal R_{n-1}(N)}\longrightarrow 0,$$
$$0\longrightarrow \frac{M\mathcal R_{n-1}(N)}{\mathcal R_n(N)}\longrightarrow
\frac{F\mathcal R_{n-1}(N)}{\mathcal R_n(N)}\longrightarrow \frac{F\mathcal
  R_{n-1}(N)}{M\mathcal R_{n-1}(N)}\longrightarrow 0.$$
  Set $k={n+r\choose r}$.
By Lemma \ref {lemmaKK}, 
it follows that $\ell (\frac {F\mathcal R_n(N)}{M\mathcal R_n(N)})= \ell
(F/M){n+r\choose r}$ and $\ell (\frac {F\mathcal R_n(N)}{\mathcal R_{n+1}(N)})= \ell (F/N){n+r\choose r}$.
Therefore we have

    \begin{eqnarray*}
    BF(n)&=&\ell \left(\frac {\mathcal S_n(F)}{\mathcal R_n(M)}\right)\\
    &=& \ell \left(\frac {\mathcal S_n(F)}{\mathcal R_n(N)}\right)- \ell
	\left(\frac {\mathcal R_n(M)}{\mathcal R_n(N)}\right)\\
    &=& \ell \left(\frac {\mathcal S_n(F)}{\mathcal R_n(N)}\right)+\ell
	\left(\frac {F\mathcal R_{n-1}(N)}{M\mathcal R_{n-1}(N)}\right) - \ell
	\left(\frac {F\mathcal R_{n-1}(N)}{\mathcal R_n(N)}\right)-\ell
	\left(\frac {\mathcal R_n(M)}{M\mathcal R_{n-1}(N)}\right)\\
    &=& br_0(N){n+r\choose r+1}+\ell \left(\frac
	{F}{M}\right){n+r-1\choose r}\\
	& & -\ell\left(\frac
	{F}{N}\right){n+r-1\choose r}-\ell \left(\frac{\mathcal
	R_n(M)}{M\mathcal R_{n-1}(N)}\right)\\
    &=& br_0(M){n+r\choose r+1}+[\ell (F/M)-br_0(M)]{n+r-1\choose r}-\ell (S_{n-1}).
    \end{eqnarray*}
\end{proof}

We now derive the Northcott type inequality for the Buchsbaum-Rim
coefficients in $2$-dimensional Cohen-Macaulay local rings.

\begin{theorem}\label{northcott}
Let $(R,\m)$ be a Cohen-Macaulay local ring of dimension $2$, $M
\subset F=R^r$ be such that $\ell(F/M) < \infty.$ Then
$br_0(M)-br_1(M)\le \ell (F/M)$. If the reduction number of $M$ is at
most $1$, then the equality holds.
\end{theorem}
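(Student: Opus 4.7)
The plan is to read off the Hilbert polynomial of the Sally module from the formula established in the preceding theorem and use non-negativity of lengths to extract the inequality. Recall that this formula says
$$BF(n)=br_0(M)\binom{n+r}{r+1}+[\ell(F/M)-br_0(M)]\binom{n+r-1}{r}-\ell(S_{n-1})$$
for all $n\ge 0$, where $S=S_N(M)$ is the Sally module associated to some chosen minimal reduction $N$ of $M$. For $n\gg 0$ the left side coincides with the Buchsbaum-Rim polynomial
$$BP(n)=br_0(M)\binom{n+r}{r+1}-br_1(M)\binom{n+r-1}{r}+\cdots+(-1)^{r+1}br_{r+1}(M).$$

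The first step is to solve the preceding formula for $\ell(S_{n-1})$, plug in $BP(n)$ in place of $BF(n)$, and watch the top-degree term $br_0(M)\binom{n+r}{r+1}$ cancel. What remains on the right is a polynomial in $n$, and when expressed in the binomial basis $\binom{n+r-1}{r},\binom{n+r-2}{r-1},\ldots$, its coefficient on $\binom{n+r-1}{r}$ works out to $\ell(F/M)-br_0(M)+br_1(M)$. Hence for large $n$,
$$\ell(S_{n-1})=\bigl[\ell(F/M)-br_0(M)+br_1(M)\bigr]\binom{n+r-1}{r}+\text{lower-order terms}.$$
Since $\ell(S_{n-1})\ge 0$ for every $n$, the leading coefficient of this polynomial must be non-negative. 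Either the polynomial has degree exactly $r$, in which case the leading coefficient is positive, or it has strictly smaller degree, in which case the coefficient is zero; in both cases
$$\ell(F/M)-br_0(M)+br_1(M)\ge 0,$$
which is the desired Northcott inequality.

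For the equality case, suppose $\red(M)\le 1$. Choose a minimal reduction $N$ with $\red_N(M)\le 1$; then $\mathcal R_{n+1}(M)=M\mathcal R_n(N)$ for every $n\ge 1$, so $S_N(M)=0$ and $\ell(S_{n-1})=0$ identically. The coefficient identification in the previous step then forces $\ell(F/M)-br_0(M)+br_1(M)=0$, i.e. $br_0(M)-br_1(M)=\ell(F/M)$. The proof is essentially a corollary of the preceding Sally-module formula; the only subtle point is the bookkeeping needed to identify the coefficient of $\binom{n+r-1}{r}$ correctly after the cancellation of the top-degree term, and since $S_N(M)$ is a module over $\mathcal R(N)$ with $\dim S_N(M)\le d+r-1=r+1$, its Hilbert polynomial has degree at most $r$, so no higher-degree contribution can sneak into the comparison. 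This dimension bound, already noted in the excerpt, is the one technical input that keeps the argument clean.
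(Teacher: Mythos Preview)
Your proof is correct and follows the paper's argument essentially line for line: solve the Sally-module formula for $\ell(S_{n-1})$, substitute $BP(n)$ for $BF(n)$ when $n\gg 0$, identify the coefficient of $\binom{n+r-1}{r}$ as $\ell(F/M)-br_0(M)+br_1(M)$, and use non-negativity of lengths; the equality case via $S_N(M)=0$ is likewise identical. Your closing remark on the dimension bound $\dim S_N(M)\le r+1$ is true but not actually needed here, since the cancellation of the $\binom{n+r}{r+1}$ term already comes directly from the formula itself.
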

\begin{proof}
Let $BP(n)$ denote Buchsbaum-Rim polynomial of $M$. Then by the
previous theorem for $n\gg 0$ we get,
  \begin{eqnarray*}
    \ell (S_{n-1})&=&br_0(M){n+r \choose r+1} +[\ell (F/M)-br_0(M)]{n+r-1\choose r}-BP(n)\\
    &=&[\ell (F/M)-br_0(M)+br_1(M)]{n+r-1\choose
	r}-br_2(M){n+r-2\choose r-1}\\
	& & +\cdots+(-1)^{r}br_{r+1}.
  \end{eqnarray*}
This implies $\ell (F/M)-br_0(M)+br_1(M)$ is non-negative, i.e.,
$br_0(M)-br_1(M)\le \ell(F/M).$
\vskip 2mm
\noindent
If for a minimal reduction $N$ of $M$, $\red_N(M) \leq 1$, then
$S_N(M) = 0$ and consequently $\ell(F/M) - br_0(M) + br_1(M) = 0$,
i.e., $br_0(M) - br_1(M) = \ell(F/M)$.
\end{proof}

\section {Direct sum of ideals}
In this section we consider the modules $M$ which are direct sum of
several copies of an $\m$-primary ideal $I$. We explicitly compute
$br_0(M)$ and $br_1(M)$ in terms of $e_0(I)$ and $e_1(I)$. As a
consequence, we prove the Northcott inequality in this case.  We also
prove that in dimension $2$, the Northcott equality holds if and only
if the reduction number is at most $1$.

\begin{theorem}\label{newthm}
Let $(R,\mathfrak m)$ be a Cohen-Macaulay local ring of dimension $d
\geq 2$
and $I$ be an $\mathfrak m$-primary ideal. For $r\in \mathbb N$, set
$F=R^r$ and $M=I\oplus \cdots \oplus I$ (r times). Then $br_0(M) -
br_(M) \leq \ell(F/M)$.
\end{theorem}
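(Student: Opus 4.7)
The plan is to compute $br_0(M)$ and $br_1(M)$ explicitly in terms of $e_0(I)$ and $e_1(I)$, and then reduce the inequality to Northcott's classical bound combined with a standard Cohen-Macaulay estimate and one combinatorial fact.

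I would first exploit the direct-sum structure. Since $M = I \oplus \cdots \oplus I$ sits inside $F = R^r$, the Rees algebra realizes in $\mathcal{S}(F) \cong R[t_1, \ldots, t_r]$ as $\mathcal{R}_n(M) = \bigoplus_{|u|=n} I^n \cdot t^u$, so $\mathcal{S}_n(F)/\mathcal{R}_n(M) \cong (R/I^n)^{\binom{n+r-1}{r-1}}$. This yields
$$BF_M(n) = \binom{n+r-1}{r-1}\, \ell(R/I^n) \quad \text{for all } n \geq 0,$$
and for $n \gg 0$, $\ell(R/I^n)$ equals the Hilbert-Samuel polynomial $P_I(n) = e_0(I)\binom{n+d-1}{d} - e_1(I)\binom{n+d-2}{d-1} + \cdots$.

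Next I would expand $\binom{n+r-1}{r-1}\, P_I(n)$ in the Buchsbaum-Rim basis $\bigl\{\binom{n+d+r-i-2}{d+r-i-1}\bigr\}$ by iteratively applying the identity $n\binom{n+k-1}{k-1} = k\binom{n+k-1}{k}$. Comparing the top two binomial coefficients yields
$$br_0(M) = \binom{d+r-1}{d}\, e_0(I), \qquad br_1(M) = (d-1)\binom{d+r-2}{d}\, e_0(I) + \binom{d+r-2}{d-1}\, e_1(I),$$
and Pascal's rule then simplifies the difference to
$$br_0(M) - br_1(M) = \binom{d+r-2}{d-1}\bigl(e_0(I) - e_1(I)\bigr) - (d-2)\binom{d+r-2}{d}\, e_0(I).$$

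To bound this expression I would invoke three inputs: Northcott's inequality $e_0(I) - e_1(I) \leq \ell(R/I)$ (Theorem \ref{n}); the Cohen-Macaulay estimate $e_0(I) \geq \ell(R/I)$, which holds because any minimal reduction $J \subseteq I$ is a parameter ideal giving $e_0(I) = e_0(J) = \ell(R/J) \geq \ell(R/I)$; and a combinatorial inequality. Since both coefficients $\binom{d+r-2}{d-1}$ and $(d-2)\binom{d+r-2}{d}$ are non-negative for $d \geq 2$, the first two bounds combine to give
$$br_0(M) - br_1(M) \leq \Bigl[\binom{d+r-2}{d-1} - (d-2)\binom{d+r-2}{d}\Bigr]\, \ell(R/I),$$
so the desired bound $\leq r\,\ell(R/I) = \ell(F/M)$ reduces to the purely combinatorial inequality $\binom{d+r-2}{d-1} - (d-2)\binom{d+r-2}{d} \leq r$.

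The main obstacle is this last combinatorial inequality. My plan is to establish the pointwise bound $\binom{d+j-2}{d-2} - 1 \leq (d-2)\binom{d+j-2}{d-1}$ for all $j \geq 1$, $d \geq 2$---which, via the identity $\frac{\binom{d+j-2}{d-1}}{\binom{d+j-2}{d-2}} = \frac{j}{d-1}$, reduces to a direct algebraic check---and then sum over $j = 1, \ldots, r-1$. Applying the hockey-stick identities $\sum_{j=0}^{r-1}\binom{d+j-2}{d-2} = \binom{d+r-2}{d-1}$ and $\sum_{j=1}^{r-1}\binom{d+j-2}{d-1} = \binom{d+r-2}{d}$ to the two sides then yields exactly the required bound.
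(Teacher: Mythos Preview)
Your argument is correct. The opening computation---identifying $BF_M(n)=\binom{n+r-1}{r-1}\ell(R/I^n)$ and extracting $br_0(M)$ and $br_1(M)$---is exactly what the paper does (it cites \cite[Theorem~2.5.2]{R} for this). Where you diverge is in the final bounding step. The paper writes $br_0-br_1-\ell(F/M)$ directly, drops the $-e_1\binom{d+r-2}{r-1}$ term using $e_1\ge 0$, and then shows by a short case analysis ($d=2$; $d\ge 3,\,r=2$; $d=3,\,r=3$; and the remaining range where the coefficient of $e_0$ is already non-positive) that the residual expression is $\le 0$. You instead rewrite $br_0-br_1$ via Pascal's rule as $\binom{d+r-2}{d-1}(e_0-e_1)-(d-2)\binom{d+r-2}{d}e_0$, apply both Northcott's inequality and the Cohen--Macaulay bound $e_0\ge \ell(R/I)$, and reduce everything to the single combinatorial inequality $\binom{d+r-2}{d-1}-(d-2)\binom{d+r-2}{d}\le r$, which you prove by summing a pointwise bound with the hockey-stick identity. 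Your route is more uniform (no case split) at the cost of invoking the extra input $e_0\ge\ell(R/I)$; the paper's route is more elementary in its inputs but requires handling several small cases by hand.
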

\begin{proof}
Let $P_I(n) =
\displaystyle{\sum_{i=0}^d e_i{n+d-i-1 \choose d-i}}$
be the Hilbert-Samuel polynomial of $I$. Then by \cite[Theorem
2.5.2]{R}, the Buchsbaum-Rim polynomial is given by
\begin{eqnarray*}
 BP(n)
  &=& P_I(n){n+r-1\choose r-1}\\
  &=& [e_0{n+d-1\choose d}-e_1{n+d-2\choose d-1}+\cdots]{n+r-1\choose r-1}\\
  &=& e_0 \frac{(d+r-1)!}{d!(r-1)!}{n+d+r-2\choose d+r-1}\\
  & & -[e_0(d-1)\frac{(d+r-2)!}{d!(r-2)!}+e_1
	\frac{(d+r-2)!}{(d-1)!(r-1)!}]{n+d+r-3\choose d+r-2}+ \cdots
\end{eqnarray*}
Therefore, $br_0(M)=e_0{d+r-1\choose r-1}$ and
$br_1(M)=e_0(d-1){d+r-2\choose r-2}+e_1{d+r-2\choose r-1}$. We now
split the proof into two cases:

\vskip 2mm
\noindent
\textbf{Case 1: $d = 2$}
\vskip 1mm
In this case, we have
$br_0(M)=e_0{r+1\choose 2}$ and $br_1(M)=e_0{r\choose
2}+e_1r$. Hence $br_0(M)-br_1(M)=e_0r-e_1r\le r\ell(R/I)=\ell(F/M).$

\vskip 2mm
\noindent
\textbf{Case 2: $d\ge 3$}
\vskip 1mm
Let $r = 2$. We then have, $br_0(M) = e_0(d+1)$ and $br_1(M) =
e_0(d-1) + e_1 d$.  Therefore, $br_0(M) - br_1(M) = 2e_0 - de_1 =
2(e_0 - e_1) - (d-2) e_1 \leq 2\ell(R/I) = \ell(F/M)$. Note that in
this case, $br_0(M) - br_1(M) = \ell(F/M)$ if and only if $e_1 = 0$ if
and only if $I$ is a parameter ideal.

Now let $r\ge 3$. We then have,
\begin{eqnarray*}\label{ne_expression1}
  br_0(M)&-&br_1(M)-\ell(F/M)\\
  &=&e_0\left[{d+r-1\choose r-1}-(d-1){d+r-2\choose r-2}\right]-e_1{d+r-2\choose r-1}-r\ell(R/I).
\end{eqnarray*}

If $d=3$ and $r=3$, then the above expression becomes
\begin{eqnarray*}
  10e_0-8e_0-6e_1-3\ell(R/I)&=&2(e_0-e_1)-4e_1-3\ell(R/I)\\
  &\le& -4e_1-\ell(R/I)\le 0.
\end{eqnarray*}
Since $(R,\m)$ is Cohen-Macaulay, $e_1\ge 0$. Therefore, to prove the
Northcott inequality, it is enough to show that
\begin{eqnarray}\label {ne_expression}
  \left[{d+r-1\choose r-1}-(d-1){d+r-2\choose r-2}\right]e_0-r\ell(R/I)\le 0.
\end{eqnarray}
Considering the coefficient of $e_0$ in the above expression, we get
\begin{eqnarray*}
  {d+r-1\choose r-1}-(d-1){d+r-2\choose r-2}&=&{d+r-2 \choose r-2}\left[\frac{d+r-1}{r-1}- (d-1) \right]\\
  &=&{d+r-2 \choose r-2}\left[2-\frac{r-2}{r-1}d\right].
\end{eqnarray*}
It is a simple verification to see that this expression is
non-positive, and hence (\ref{ne_expression1}) holds, for $d = 3; r \geq 4$ and $d \geq 4; r \geq 3$. 
\end{proof}

Below we show that the direct sum of parameter ideal, in rank $2$, has
reduction number one.
\begin{proposition}\label{para}
Let $(R,\m)$ be a Cohen-Macaulay local ring of dimension $d \geq 2$, $I =
(a_1, \ldots, a_d)$ be a parameter ideal and $M = I \oplus I$. Then
the submodule $N$ of $M$ generated by the columns of the matrix
$\displaystyle{
\begin{bmatrix}
  a_1 & a_2 & \cdots & a_d & 0 \\
  0   & a_1 & \cdots & a_{d-1} & a_d
\end{bmatrix}
}$ is a minimal reduction of $M$ with $\red_N(M) = 1$.
\end{proposition}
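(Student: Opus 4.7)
The plan is to establish three facts which together force $N$ to be a minimal reduction of $M$ with $\red_N(M) = 1$: (I) $N$ is a parameter module, so in particular $\ell(F/N) < \infty$; (II) $\mathcal{R}_2(M) = N \cdot \mathcal{R}_1(M)$, giving $\red_N(M) \le 1$; and (III) $N \subsetneq M$, giving $\red_N(M) \ge 1$. Since $N$ is generated by $d+r-1 = d+1$ elements, part (I) makes it a minimal reduction. Fact (III) is immediate from $\mu(N) \le d+1 < 2d = \mu(M)$ when $d \ge 2$.

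For part (I), I will compute the ideal of $2 \times 2$ minors $I_2(\widetilde{N})$ of the defining matrix. Adopting the convention $a_0 = a_{d+1} = 0$, the columns take the uniform form $(a_i, a_{i-1})$ for $i = 1, \ldots, d+1$, so the $(j,k)$-minor equals $a_j a_{k-1} - a_k a_{j-1}$. The specializations $j = 1$ and $k = d+1$ place $a_1 I$ and $a_d I$ into $I_2(\widetilde{N})$, while the Pl\"ucker-type relations from pairs $2 \le j < k \le d$ enable a short induction on $i$ showing $a_i I \subseteq I_2(\widetilde{N})$ for every $i$. Hence $I_2(\widetilde{N}) = I^2$ is $\m$-primary, and $\ell(F/N) < \infty$.

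For part (II), which is the heart, I will view $\mathcal{S}_2(F) = Re_1^2 \oplus Re_1 e_2 \oplus Re_2^2$ and write elements as triples $(x, y, z)$, so $\mathcal{R}_2(M) = I^2 \oplus I^2 \oplus I^2$. Under the same convention, the generators of $N \cdot M$ as an $R$-module become
\[
n_i \cdot a_j e_1 = (a_i a_j,\, a_{i-1} a_j,\, 0), \qquad n_i \cdot a_j e_2 = (0,\, a_i a_j,\, a_{i-1} a_j),
\]
and swapping $i \leftrightarrow j$ yields the key Pl\"ucker identity
\[
(0,\, a_{i-1} a_j - a_{j-1} a_i,\, 0) \in N \cdot M \qquad (i, j \in \{1, \ldots, d\}).
\]
I will then prove by induction on $p$ that $(0, a_p a_q, 0) \in N \cdot M$ for all $p, q$: the bases $p = 1$ and $p = d$ come from $n_1 \cdot a_q e_2$ and $n_{d+1} \cdot a_q e_1$, and the step from $m$ to $m+1$ applies the Pl\"ucker identity with $i = m+1$ to handle $q \in \{1, \ldots, d-1\}$, the remaining $q = d$ being supplied by the $p = d$ base. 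Once every middle-coordinate element lies in $N \cdot M$, subtracting $(0, a_{i-1} a_j, 0)$ from $n_i \cdot a_j e_1$ places $(a_i a_j, 0, 0) \in N \cdot M$, and subtracting $(0, a_{i+1} a_j, 0)$ from $n_{i+1} \cdot a_j e_2$ places $(0, 0, a_i a_j) \in N \cdot M$. This gives $\mathcal{R}_2(M) \subseteq N \cdot M$, and equality follows since the reverse inclusion is obvious.

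The main obstacle will be the middle-coordinate induction. A naive approach that tries to derive the first- and third-coordinate triples directly from the explicit generators runs into a circular dependency among the three coordinate classes; resolving this circularity genuinely requires the antisymmetric Pl\"ucker relations coming from the off-diagonal structure of $\widetilde{N}$, which is the new ingredient beyond mere expansion of $n_i \cdot a_j e_\nu$.
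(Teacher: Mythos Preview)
Your proposal is correct and complete. The core computation --- showing that every monomial $a_ia_j$ times each of the three degree-$2$ basis elements of $\mathcal S_2(F)$ lies in $N\cdot M$ --- is the same task the paper tackles, but the organization differs. The paper works in the bigraded algebra $R[It_1,It_2]$ and writes a chain of four identities that reduce the question of whether $a_ia_jt_1^2\in L$ to whether $a_{i-2}a_{j+2}t_1^2\in L$, cycling through the $t_1^2$, $t_1t_2$, $t_2^2$ components in the process; your approach instead isolates the middle component first via the antisymmetric relation $(0,a_{i-1}a_j-a_{j-1}a_i,0)\in N\cdot M$ and then derives the outer components by a single subtraction. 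Your route is arguably cleaner, since the induction runs on a single index and the outer components fall out for free. You also verify explicitly that $N$ is a parameter module (by computing $I_2(\widetilde N)=I^2$) and that $N\subsetneq M$, points the paper leaves implicit.
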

\begin{proof}
Using the isomorphism $\R(M) \cong R[It_1, It_2]$, we move all the
computations to the bigraded Rees algebra. To prove the assertion, it
is enough to show that
\begin{eqnarray}\label{j-red}
I^2t_1^2 + I^2t_1t_2 + I^2t_2^2 & = & (a_1t_1,
a_2t_1+a_1t_2, \ldots, a_dt_1+a_{d-1}t_2,
a_dt_2)(It_1+It_2).
\end{eqnarray}
Set $L = (a_1t_1,
a_2t_1+a_1t_2, \ldots, a_dt_1+a_{d-1}t_2,
a_dt_2)(It_1+It_2).$
We show that for any $1 \leq i,j \leq d, ~ a_ia_jt_1^2, a_ia_jt_1t_2,
a_ia_jt_2^2$ belong to $L$. First note that for all $1 \leq i, j \leq
d$ the elements $a_1a_jt_1^2, a_1a_jt_1t_2, a_ia_dt_1t_2, a_ia_dt_2^2$ are all in
$L$.  Consider the following set of equations:
\begin{eqnarray*}
  a_ia_jt_1^2 & = & a_jt_1(a_it_1+a_{i-1}t_2) - a_ja_{i-1}t_1t_2 \\
  a_ja_{i-1}t_1t_2 & = & a_jt_2(a_{i-1}t_1+a_{i-2}t_2) -
  a_ja_{i-2}t_2^2 \\
  a_ja_{i-2}t_2^2 & = & a_{i-2}t_2(a_{j+1}t_1+a_jt_2) -
  a_{i-2}a_{j+1}t_1t_2 \\
  a_{i-2}a_{j+1}t_1t_2 & = &a_{i-2}t_1(a_{j+2}t_1+a_{j+1}t_2) -
  a_{i-2}a_{j+2}t_1^2.
\end{eqnarray*}
Then $a_ia_jt_1^2 \in L$ if and only if $a_{i-2}a_{j+2}t_1^2 \in L$.
If $i = 2$, the first equation itself will yield that $a_ia_jt_1^2 \in L$.
If $j = d-1$, then the third equation will yield that $a_ia_jt_1^2 \in
L$. If $i > 2$ and $j < d-1$, proceeding as above, one will hit an
element of the form $a_1a_jt_1^2, a_1a_jt_1t_2, a_ia_dt_1t_2$ or
$a_ia_dt_2^2$, which will imply that $a_ia_jt_1^2 \in L$. Similar
arguments will give us the other required inclusions. Hence $\red_N(M)
= 1$.
\end{proof}
\begin{corollary}\label{directsum}
Let $(R,\m)$ be a $d$-dimensional Cohen-Macaulay local ring,
$I$ be an $\m$-primary ideal and $M = I \oplus \cdots \oplus I$
($r$-times).
\begin{enumerate}
  \item If $d = 2$, then $br_0(M) - br_1(M) = \ell(F/M)$ if and only
	if $\red(M) = 1$.
  \item If $d \geq 3$, $r = 2$ and $br_0(M) - br_1(M) =
	\ell(F/M)$, then $\red(M) = 1$.
\end{enumerate}
\end{corollary}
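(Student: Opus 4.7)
The plan is to prove both parts by combining the explicit formulas for $br_0(M)$ and $br_1(M)$ computed in Theorem~\ref{newthm} with the Huneke-Ooishi equality characterization (Theorem~\ref{ho}) and the explicit reduction construction of Proposition~\ref{para}. Throughout I implicitly take $r \ge 2$; the case $r = 1$ is the classical Hilbert-Samuel situation, covered directly by Theorem~\ref{ho}.

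For part~(1), with $d = 2$, Case~1 of the proof of Theorem~\ref{newthm} gives $br_0(M) - br_1(M) = r(e_0(I) - e_1(I))$ while $\ell(F/M) = r\,\ell(R/I)$. Hence the equality $br_0(M) - br_1(M) = \ell(F/M)$ is equivalent to $e_0(I) - e_1(I) = \ell(R/I)$, which by Theorem~\ref{ho} is equivalent to the existence of a minimal reduction $J = (a_1, a_2)$ of $I$ with $I^2 = JI$. The direction $\red(M) = 1 \Rightarrow$ equality is immediate from Theorem~\ref{northcott}. For the converse, given such a $J$, I would define $N \subset M$ to be the submodule generated by the columns of the $r \times (r+1)$ matrix
\[
\begin{bmatrix}
a_1 & a_2 & 0 & \cdots & 0\\
0 & a_1 & a_2 & \cdots & 0\\
\vdots & & \ddots & \ddots & \vdots\\
0 & 0 & \cdots & a_1 & a_2
\end{bmatrix}.
\]
A direct Fitting-ideal computation shows the $r \times r$ minors generate $J^r$, which is $\m$-primary, so $N$ is a parameter module and hence a minimal reduction of $M$ by \cite{BUV}. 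An induction on $n$ using $I^{n+1} = JI^n$ then reduces the claim $\red_N(M) \le 1$ to the single identity $\R_2(M) = NM$.

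To prove $\R_2(M) = NM$ I would pass to the bigraded Rees algebra $\R(M) = R[It_1, \dots, It_r]$ so that $\R_2(M) = I^2 \mathcal{S}_2(F) = JI\,\mathcal{S}_2(F)$; the inclusion $NM \subseteq \R_2(M)$ is immediate, and for the reverse it suffices to show that every monomial $a_k b\, t_i t_j$ with $k \in \{1,2\}$, $b \in I$ and $1 \le i \le j \le r$ lies in $NM$. Extending the walking scheme in the proof of Proposition~\ref{para}, one rewrites $a_1 t_i = (a_2 t_{i-1} + a_1 t_i) - a_2 t_{i-1}$ for $i \ge 2$ and $a_2 t_j = (a_2 t_j + a_1 t_{j+1}) - a_1 t_{j+1}$ for $j \le r - 1$: each step absorbs the leading piece into $NM$ via a column of the matrix and shifts $(i,j) \mapsto (i-1, j+1)$, and the walk terminates once $i = 1$ or $j = r$, since $a_1 t_1$ and $a_2 t_r$ are themselves columns of the matrix. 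Finally, $\mu(M) = r\,\mu(I) > d + r - 1$ forces $M$ not to be a parameter module, so $\red(M) \ge 1$, and we conclude $\red(M) = 1$.

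For part~(2), with $d \ge 3$ and $r = 2$, Case~2 of Theorem~\ref{newthm} yields the identity
\[
\ell(F/M) - \bigl(br_0(M) - br_1(M)\bigr) = 2\bigl(\ell(R/I) - (e_0(I) - e_1(I))\bigr) + (d - 2)\,e_1(I),
\]
and Theorem~\ref{n} shows both terms on the right are non-negative. Equality therefore forces $e_1(I) = 0$, and Theorem~\ref{n}(2) then identifies $I$ as a parameter ideal, at which point Proposition~\ref{para} supplies a minimal reduction $N$ of $M$ with $\red_N(M) = 1$; the count $\mu(M) = 2d > d + 1 = d + r - 1$ rules out $M$ being a parameter module, so $\red(M) = 1$. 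The principal obstacle is the combinatorial walking step in part~(1): one must verify that the coupled index movement terminates inside $NM$ for every $r$, which requires care in tracking the two boundary conditions $i = 1$ and $j = r$ simultaneously when $r > 2$.
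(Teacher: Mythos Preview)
Your argument for part~(2) is essentially identical to the paper's: the computation in Case~2 of Theorem~\ref{newthm} forces $e_1(I)=0$, hence $I$ is a parameter ideal, and Proposition~\ref{para} finishes.

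For part~(1) your proof is correct but takes a different route from the paper. After the common reduction to the equivalence $br_0(M)-br_1(M)=\ell(F/M)\Leftrightarrow e_0(I)-e_1(I)=\ell(R/I)\Leftrightarrow \red(I)\le 1$, the paper invokes Remark~\ref{red}: since all the summands equal $I$, the joint-reduction condition $\jr(I\mid I)=0$ is exactly $\red(I)\le 1$; by \cite[Corollary~4.5]{SUV} this is equivalent to $\R(M)$ being Cohen--Macaulay, and by \cite[Proposition~4.4]{SUV} that in turn is equivalent to $\red(M)\le 1$. This is short but imports the Simis--Ulrich--Vasconcelos machinery and the Liu-type splitting noted in Remark~\ref{red}. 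You instead exhibit an explicit parameter module $N$ built from the $2\times(r+1)$ ``banded'' matrix in $a_1,a_2$, verify its Fitting ideal is $J^r$, and run a walking scheme $(i,j)\mapsto(i-1,j+1)$ to prove $NM=\R_2(M)$ directly. This is longer but self-contained and gives a concrete minimal reduction for every $r$, effectively extending Proposition~\ref{para} from the parameter-ideal case to any $I$ with $\red(I)\le 1$; the termination is clear since each half-step either lands on the boundary $i=1$ (where $a_1t_1$ is a column) or $j=r$ (where $a_2t_r$ is a column), and the quantity $i+(r-j)$ strictly decreases. One small point: your assertion that $N$ is a \emph{minimal} reduction is justified not really by \cite{BUV} alone but by the standard fact (for infinite residue field) that any reduction generated by $d+r-1$ elements is automatically minimal; you may want to make that explicit.
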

\begin{proof}
(1) From the Case 1 in the above discussion preceding Proposition
\ref{para}, it follows that $br_0(M) - br_1(M) = \ell(F/M)$ if and only if
$e_0 - e_1 = \ell(R/I)$ if and only if $\red(I) \leq 1$ if and only if
$\red(M) = 1$, by Remark \ref{red}.
\vskip 2mm
\noindent
(2) From the Case 2 above, it follows that $br_0(M) - br_1(M) =
\ell(F/M)$ if and only if $I$ is a parameter ideal. Now, it follows
from the Proposition \ref{para} that
if $I$ is a parameter ideal, then
$I\oplus I$ has reduction number one.
\end{proof}

If the rank of $M$ is three, then an analogue Proposition \ref{para}
does not hold. Let $M = \m \oplus \m \oplus \m$, where $\m = (x,y,z)
\subset k[\![x,y,x]\!]$. Then it can be seen that the submodule $N$
generated by the columns of the matrix
$
\begin{bmatrix}
x & y & z & 0 & 0 \\
0 & x & y & z & 0 \\
0 & 0 & x & y & z
\end{bmatrix}
$
is a minimal reduction of $M$ with $\red_N(M) = 2$. The idea of
getting minimal reduction of the above form comes from the work of J.
-C. Liu, \cite{liu}.
\vskip 2mm
\noindent
\begin{example}\label{ex1}
Let $R=k[\![X,Y]\!], I=(X^3,X^2Y^4,XY^5,Y^7), J=(X^3,Y^7)$. Then $R$ is
a $2$-dimensional regular local ring and $J$ is a minimal reduction of
$I$ with reduction number $2$. It can be easily seen that
$P_I(n)=21{n+1\choose 2}-6{n\choose 1}+1$. Set
$F=R\oplus R, M=I\oplus I$. Then again using \cite[Theorem 2.5.2]{R},
we get $br_0=63$ and $br_1=33$.
Therefore $br_0(M)-br_1(M) = 30 < 32 = \ell(F/M).$ Let $N$ be the
submodule generated by the columns of
$
\begin{bmatrix}
X^3 & Y^7 & 0 \\
0 & X^3 & Y^7
\end{bmatrix}.$
Then, it can be seen that $N$ is a minimal reduction of $M$ with
$\red_N(M) = 2$.
\end{example}
As in the case of ideals,
the example below shows that the Cohen-Macaulayness of the Rees
algebra alone need not necessarily imply that $br_0(M) - br_1(M) =
\ell(F/M)$ if $\dim R \geq 3$.

\begin{example}
  Let $R=k[\![X,Y,Z]\!]$, $I=(X^3,X^2Y^2,Y^3,Z^4)$ and $M=I\oplus I$. It can be verified that $\R (M)\cong R[It_1,It_2]$ is Cohen-Macaulay. So by \cite [Theorem 6.1]{hyry}, $BF(n)=BP(n)$ for all $n\in \mathbb{N}$. The Buchsbaum-Rim polynomial can be computed as
  $$BP(n)=144{n+3\choose 4}-84{n+2\choose 3}+4{n+1\choose 2}.$$
  Therefore $br_0(M)-br_1(M)= 60 < 64 = \ell(F/M).$
\end{example}
We conclude the article with a question:
\begin{question}
Let $(R,\m)$ be a Cohen-Macaulay local ring of dimension $d > 2$ and $M
\subset F = R^r$ be such that $\ell(F/M) < \infty.$ Then is $br_0(M) -
br_1(M) \leq \ell(F/M)$? Does the equality $br_0(M) - br_1(M) =
\ell(F/M)$ hold if and only if
$\red_N(M) = 1$ for some (any) minimal reduction $N$ of $M$?
\end{question}

\end{document}